\documentclass[11pt,a4paper]{article}

\usepackage{amsmath,amssymb,amscd,amsthm}
\usepackage{graphics}
\usepackage{enumerate}
\input xy
\xyoption{all}

\usepackage{comment}
\usepackage{tikz}
\usepackage{setspace}

\usetikzlibrary{arrows}

\usepackage[paperwidth=210mm,
            paperheight=297mm,
            a4paper,
            twoside,
            top=1in, left=1.2in, right=1.2in, bottom=1in]{geometry}

\usepackage[active]{srcltx}

\newtheorem{lemma}{Lemma}

\newtheorem{thm}[lemma]{Theorem}

\newtheorem{rmk}[lemma]{Remark}

\newcommand{\ol}{\overline}
\newcommand{\hstar}{\mathfrak{h}^{*}}
\newcommand{\bbc}{\mathbb{C}}
\newcommand{\uh}{\mathcal{U}(\mathfrak{h})}
\newcommand{\uhn}{\mathcal{U}(\mathfrak{h}_n)}

\newcommand{\bb}[1]{\mathbb{#1}}
\newcommand{\fr}[1]{\mathfrak{#1}}
\newcommand{\ca}[1]{\mathcal{#1}}

\def\a{\mathbf{a}}
\def\e{\epsilon}
\usepackage{float}
\restylefloat{figure}

\allowdisplaybreaks

\begin{document}

\title{Generalized Verma modules over $\fr{sl}_{n+2}$ induced from $\ca{U}(\fr{h}_n)$-free $\fr{sl}_{n+1}$-modules}
\author{Yan-an Cai, Genqiang Liu, Jonathan Nilsson, Kaiming Zhao}
\date{}
\maketitle

\begin{abstract}
\noindent A class of generalized Verma modules over $\fr{sl}_{n+2}$ is constructed from  $\fr{sl}_{n+1}$-modules which are  $\uhn$-free modules of rank $1$.
 The necessary and sufficient conditions for these $\fr{sl}_{n+2}$-modules to be simple are determined. This leads to a class of  new simple $\fr{sl}_{n+2}$-modules.
\end{abstract}

\section{Introduction}
Classification of simple modules is an important step in the study of a module category over an algebra $\fr{g}$.
When $\fr{g}$ is a nontrivial complex  Lie algebra this turns out to be a very difficult problem in general, only for $\fr{g}=\fr{sl}_{2}$ (and several related small dimensional Lie algebras) there is such a classification to some extent, see~\cite{Bl,AP,Maz3}.
 Nevertheless, some subcategories of modules are relatively well understood, and their study has
occupied many mathematicians over the last century.

The most thoroughly studied class of modules are the \emph{weight modules} with finite dimensional weight spaces, which are modules on which a Cartan subalgebra $\fr{h} \subset \fr{g}$ acts diagonally.
Finite dimensional weight modules for complex finite dimensional semisimple Lie algebras were classified by Cartan already in 1913, see~\cite{Ca}.
The study of infinite dimensional weight modules was started by Verma and others in the 1960's (see~\cite{Ve}) and various results on weight modules were gathered over the years,
 see for example~\cite{Di,Hu,BGG,LLZ}. This eventually led to the classification of simple weight modules over semisimple Lie algebras with finite dimensional weight spaces in 2000, see~\cite{Fe,Fu,Mat}.
Some other classes of modules are also worth mentioning. These include Kostant's \emph{Whittaker modules}, on which the algebra of positive roots act locally finitely, see~\cite{Kos}.
 Also see~\cite{OW,BM,MZ,MW,Ni3} for a some related classes of modules. \emph{Gelfand-Zetlin} modules form a large family of modules parametrized by a type of triangular tableaux, see~\cite{DFO1,DFO2,Maz2,FGR}.
More recently the category of \emph{$\uh$-free modules} was defined, its objects are modules whose restriction to the Cartan subalgebra is free of rank $1$.
Simple $\uh$-free modules were classified for all finite-dimensional simple Lie algebras, see~\cite{Ni1,Ni2}, and also for some infinite dimensional Lie algebras, see~\cite{TZ1,TZ2}.

If $\fr{p}$ is a parabolic subalgebra of $\fr{g}$ with Levi-decomposition $\fr{p}=\fr{a}' \oplus \fr{n}$ and $V$ is an $\fr{a'}$-module with $\fr{n} V =0$, one usually writes
\[M_{\fr{p}}(V):={\text{Ind}}_{\fr{p}}^{\fr{g}}V=\mathcal{U}(\fr{g}) \otimes_{\mathcal{U}(\fr{p})} V\]
for the corresponding \emph{generalized Verma module} (GVM). GVM's have been extensively studied, especially under the condition that $V$ is a weight module, see for example~\cite{Maz1,BFL} and references therein.
In 1985 McDowell studied the GVM $M_{\fr{p}}(V)$ when $V$ is a simple non-degenerate Whittaker module, and he managed to determine necessary and sufficient conditions for $M_{\fr{p}}(V)$ to be simple, see~\cite{McD1,McD2}.
For non-weight modules $V$, there are also some more theoretical results on the structure of $M_{\fr{p}}(V)$, see~\cite{MaSt1,MiSo1,MiSo2,KM1,KM2}.

The present paper is inspired by McDowell's results. Consider $\fr{sl}_{n+1}$ as a subalgebra of $\fr{sl}_{n+2}$ in the top left corner.
  Let $\fr{p} \subset \fr{sl}_{n+2}$ be the parabolic subalgebra consisting of the standard Borel subalgebra plus $\fr{sl}_{n+1}$. Let $V$ be any  $\fr{sl}_{n+1}$-module which is
  a $\uhn$-free module of rank $1$. Using the classification
in~\cite{Ni1} (see Lemma \ref{classi}), $V$ can be extended to a $\fr{p}$-module labeled $V(\a,S,b,\lambda)$ for $ b\in\bbc,\lambda\in\bbc, \a=(a_1,\cdots,a_n,1)\in(\bbc^*)^{n+1}, S\subseteq \{1,2,...,n+1\}$.
In this setting our main result can be stated as follows (see Theorem \ref{mainthm} for all the details).

\begin{thm}
\label{introthm}
The induced module $M_{\fr{p}}(V(\a,S,b,\lambda))$ is simple if and only if  all the following conditions hold
  \begin{itemize}\item[(i).] $b-\lambda+2\not\in\bb{N}$, \item [(ii).] $nb+\lambda-1\not\in-\bb{N}$, \item[(iii).] $1 \leq |S| \leq n$ or $(n+1)b\notin \bb{Z}_+$.\end{itemize}
\end{thm}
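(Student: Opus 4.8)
The plan is to analyze the module $M_{\fr{p}}(V(\a,S,b,\lambda))$ using the standard machinery for generalized Verma modules. Since $\fr{p} = (\fr{sl}_{n+1} \oplus \fr{h}') \oplus \fr{n}^+$ where $\fr{n}^+$ is the nilradical (the last row of $\fr{sl}_{n+2}$, minus the corner) and $\fr{n}^-$ the opposite nilradical (the last column), the PBW theorem gives $M_{\fr{p}}(V) \cong \ca{U}(\fr{n}^-) \otimes_{\bbc} V$ as a vector space. The grading by the $\fr{n}^-$-degree (equivalently, the eigenvalue of the extra Cartan element $h_{n+1}$ measuring the last diagonal slot) makes $V$ the top graded piece. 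First I would identify the action explicitly: the generators $f_{n+1,i}$ of $\fr{n}^-$ act by left multiplication, the $e_{i,n+1}$ in $\fr{n}^+$ act by a combination of lowering the degree and acting on the $\fr{sl}_{n+1}$-structure of $V$, and the commutation relations $[e_{i,n+1}, f_{n+1,j}]$ land back in $\fr{sl}_{n+1} \oplus \fr{h}'$. This is the computational heart: one needs to write down, for a vector $x \otimes v$ with $x \in \ca{U}(\fr{n}^-)$ a monomial, the formulas for the $\fr{n}^+$-action, and these will involve the parameters $b$ (from the central/Cartan part) and $\lambda$ together with the combinatorial data $(\a, S)$ through the $\fr{sl}_{n+1}$-action on $V$ given by Lemma~\ref{classi}.

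Next I would reduce simplicity to a one-dimensional question. The $\fr{sl}_{n+1}$-module $V$ is a $\uhn$-free module of rank $1$, hence the degree-$k$ piece $\ca{U}(\fr{n}^-)_k \otimes V$ is, as an $\fr{sl}_{n+1} \oplus \fr{h}'$-module, again built from such modules (tensored with a finite-dimensional piece coming from $\ca{U}(\fr{n}^-)_k$). The key observation is that $M_{\fr p}(V)$ is generated by its top piece $V$, and $V$ itself is simple as an $\fr p$-module (this uses the classification, and that $\fr n^+ V = 0$). Therefore $M_{\fr p}(V)$ is simple if and only if it has no proper submodule, and any submodule, being graded, either contains $V$ (hence is everything) or is contained in $\bigoplus_{k \geq 1}\ca{U}(\fr n^-)_k \otimes V$. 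So simplicity fails exactly when there is a nonzero highest-weight-type vector in positive degree annihilated by $\fr n^+$, OR when a proper submodule exists that does not inject into the top — the latter corresponds to $V$ itself sitting inside a larger submodule, i.e. to $M_{\fr p}(V)$ having a quotient by a submodule meeting $V$ trivially. I would organize the argument around computing, for each $k \geq 1$, whether $\fr n^+$ can kill a vector of the form $\sum f_{n+1,i_1}\cdots f_{n+1,i_k}\otimes v_{i_1\cdots i_k}$.

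The three conditions should emerge as follows. Applying a single $e_{i,n+1}$ to a degree-one vector $f_{n+1,j}\otimes v$ produces terms involving a scalar like $(b - \lambda + \text{something})$ times $\delta$-type contributions plus the $\fr{sl}_{n+1}$-action; requiring a degree-one singular vector forces the vanishing of a linear expression in $b,\lambda$, and iterating up the tower of degrees converts this into the condition $b - \lambda + 2 \in \bb{N}$ — so condition (i) is the obstruction coming from degree-$k$ singular vectors going \emph{down} one route. Symmetrically (via a kind of duality swapping the roles of the two "ends"), condition (ii), $nb + \lambda - 1 \in -\bb{N}$, is the obstruction from the other contraction pattern; the factor $n$ appears because one sums over the $n$ indices $i \in \{1,\dots,n\}$. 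Condition (iii) is the "finite-dimensional coincidence": when $|S| = 0$ or $|S| = n+1$ the $\fr{sl}_{n+1}$-module $V$ fails to be simple in a way detected only when $(n+1)b \in \bb Z_+$, producing a submodule via the finite-dimensional $\fr{sl}_{n+1}$-constituent — this is exactly the case where $V(\a,S,b,\lambda)$ restricted to $\fr{sl}_{n+1}$ is not simple, by Lemma~\ref{classi}. I expect the main obstacle to be the bookkeeping in the middle step: pinning down precisely which linear combinations in $\ca{U}(\fr n^-)_k \otimes V$ can be singular requires carefully tracking how the $n$ raising operators $e_{i,n+1}$ interact with the $\binom{\text{stuff}}{k}$-dimensional combinatorics of degree-$k$ monomials in the $f_{n+1,j}$, and showing that the only solvability conditions are the stated ones and that they are also \emph{sufficient} (no hidden submodules) — the sufficiency direction typically requires exhibiting that under (i)–(iii) every nonzero vector generates the whole module, which I would do by a downward induction on degree, using that the maps $\fr n^+ : \ca{U}(\fr n^-)_k \otimes V \to \ca{U}(\fr n^-)_{k-1}\otimes V$ are injective on the relevant pieces precisely when (i) and (ii) hold with the shifted parameters.
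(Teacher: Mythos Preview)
Your strategy is essentially the paper's own approach: grade $M_{\fr p}(V)$ by the $\fr{n}^-$-degree (equivalently by the eigenvalue of the extra Cartan element, which in the paper's indexing is $h_{n+2}$, not $h_{n+1}$; your nilradical should be the last \emph{column} $\{e_{i,n+2}\}$ and its opposite the last \emph{row} $\{e_{n+2,i}\}$), reduce simplicity to the nonexistence of $\fr{n}^+$-singular vectors in positive degree, and analyze the resulting linear system. The paper makes this completely explicit: the condition $e_{i,n+2}\cdot v=0$ for a homogeneous degree-$N$ vector becomes a linear system $A(\lambda,b,S,N)X=0$ over $\bbc[h_1,\dots,h_n]$ whose determinant factors as $(-nb-\lambda-N+1)(b-\lambda-N+2)^n$, so (i) and (ii) are exactly the nonvanishing of this determinant for all $N\in\bb N$; necessity is handled not by an abstract duality but by writing down explicit singular vectors $v_1,v_2$ in degree $N$ when one of the factors vanishes. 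Condition (iii) is, as you say, precisely the simplicity of $V$ as an $\fr{a}'$-module (Lemma~\ref{classi}), and when it fails the induced module is trivially reducible. Two small points to tighten: your second ``OR'' clause about submodules not injecting into the top is redundant once $V$ is simple (any proper submodule then misses $V$ entirely and hence contains a singular vector of positive degree, by your minimal-degree argument), and the two factors of the determinant do not arise from a symmetry or duality but simply from row-reducing the $(n+1)\times(n+1)$ matrix $A$; the exponent $n$ on $(b-\lambda-N+2)$ reflects that this eigenvalue has multiplicity $n$.
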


Our paper is organized as follows:
In Section~\ref{section2} we define our setup, obtain some preliminary results on the structure of $M_{\fr{p}}(V(\a,S,b,\lambda))$, and give some technical lemmas.
Section~\ref{section3} is dedicated to the proof of our main theorem. We first prove that every nonzero element of
 $M_{\fr{p}}(V(\a,S,b,\lambda))$ can be reduced to a nonzero element of $1 \otimes V(\a,S,b,\lambda)$ under the conditions for simplicity.
Necessity of the conditions for simplicity is then proved by constructing explicit submodules
of $M_{\fr{p}}(V(\a,S,b,\lambda))$ for every $\lambda$ and $b$ such that either $b-\lambda+2$ or $-nb-\lambda+1$ is a natural number.

\begin{rmk}
Every annihilator of a simple module $L$ is also the annihilator of a simple highest weight module $L(\mu)$ for some $\mu \in \hstar$.
Moreover, a result of the paper \cite{MaSt1} states that if $Ann(L)=Ann(L(\mu))$ for two simple $\fr{p}$-modules $L$
 and $L(\mu)$, then $Ind_{\fr{p}}^{\fr{g}} L$ is simple if and only if $Ind_{\fr{p}}^{\fr{g}} L(\mu)$ is.
Thus an alternative way to prove the simplicity of our $M_{\fr{p}}(V)$ above would be to find out which $\mu$'s corresponds to our $\uh$-free modules, and then use Kazhdan--Lusztig combinatorics to show that
the highest weight module $Ind_{\fr{p}}^{\fr{g}} L(\mu)$ is simple for exactly these $\mu$. However, both of these steps seem difficult at the moment.\\
\end{rmk}

\noindent
{\bf Acknowledgements.}
The majority of this work was conducted during the first three authors' visit to the fourth author at Wilfrid Laurier University, Waterloo, in the summer of 2015.
The first three authors gratefully appreciates the hospitality of Professor Kaiming Zhao at Laurier.
G. Liu is partially supported by NSF of China (Grant
11301143) and  the school fund of Henan University (2012YBZR031, yqpy20140044);  J. Nilsson was partially supported by The Royal Swedish Academy of Sciences;
K. Zhao is partially supported by  NSF of China (Grants 11271109, 11471233) and NSERC.
We also thank Volodymyr Mazorchuk for his comments related to the connection to the paper~\cite{MaSt1}.

\section{Technical lemmas}
\label{section2}
Throughout the paper, we denote by $\bbc,\bb{N},\bb{Z}_+$ the sets of all complex numbers, positive integers and nonnegative integers, respectively. For  a set $S$ we define the indicator functions
$$\delta_{s\in S}=\begin{cases}&1 \text{ if } s\in S\\ &0 \text{ if } s\notin S\end{cases}, \  \  \delta_{s\notin S}=\begin{cases}&0 \text{ if } s\in S\\ &1 \text{ if } s\notin S\end{cases}.
$$
In this section we collect some of the basic definitions and establish technical results needed for studying our modules.

Let $\fr{g}$ be a semisimple complex finite dimensional Lie algebra with a fixed triangular decomposition $\fr{g}=\fr{n}_-\oplus\fr{h}\oplus\fr{n}_+$ and $\fr{p}\supset\fr{h}\oplus\fr{n}_+$ a parabolic subalgebra of $\fr{g}$ with the Levi decomposition $\fr{p}=(\fr{a}\oplus\fr{h}_\fr{a})\oplus\fr{n}$, where $\fr{n}$ is nilpotent, $\fr{a}'=\fr{a}\oplus\fr{h}_\fr{a}$ is reductive, $\fr{a}$ is semisimple and $\fr{h}_\fr{a}\subset\fr{h}$ is abelian and central in $\fr{a}'$. A {\it generalized Verma module} over $\fr{g}$ is an induced module
\[
M_\fr{p}(V)=\ca{U}(\fr{g})\otimes_{\ca{U}(\fr{p})}V,
\]
where $V$ is an $\fr{a}'$-module and $\fr{n}V=0$.

Consider the algebra $\fr{sl}_{n+2}(\bbc)$. Denote by $e_{i,j}(1\leq i,j\leq n+2)$ the $(n+2)\times(n+2)$ matrix with zeros everywhere except a $1$ on position $(i,j)$. For $1\leq k\leq n+2$, let $$h_{k}:=e_{k,k}-\frac{1}{n+1}\sum_{i=1}^{n+1}e_{i,i}.$$ Note that $h_{n+1}=-h_1-h_2-\cdots -h_{n}$. Then
\[  \{e_{i,j}| 1 \leq i\neq j \leq n+2\} \cup \{h_{1}, \ldots h_{n}, h_{n+2}\} \]
is a basis for $\fr{sl}_{n+2}(\bbc)$, and
\[  \{e_{i,j}| 1 \leq i\neq j \leq n+1\} \cup \{h_{1}, \ldots h_{n}\} \]
is a basis for $\fr{sl}_{n+1}(\bbc)$. We see that   $[\mathfrak{sl}_{n+1},h_{n+2}]=0$. The subspace spanned by $\{h_{1}, \ldots h_{n}\}$  is the standard Cartan subalgebra $\fr{h}_n$ of $\fr{sl}_{n+1}(\bbc)$. With respect to the basis, the bracket operation is given by the following lemma.
\begin{lemma}
\label{lemma1}
For $1\leq i\neq j\leq n+2, 1\leq i'\neq j'\leq n+2$ and $1\leq k,k'\leq n+2$, we have
\begin{align*}
[e_{i,j},e_{i',j'}] & = \delta_{j,i'}e_{i,j'} - \delta_{i,j'}e_{i',j},\\
[h_{k},e_{i,j}]&= (\delta_{k,i}-\delta_{k,j} + \frac{1}{n+1}(\delta_{i,n+2}-\delta_{j,n+2}))e_{i,j},\\
[h_{k},h_{k'}]&=0.
\end{align*}
\end{lemma}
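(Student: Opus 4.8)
The plan is to reduce everything to the single multiplication rule for matrix units, namely $e_{i,j}e_{i',j'}=\delta_{j,i'}e_{i,j'}$, valid for all $1\le i,j,i',j'\le n+2$ and immediate from the definition of $e_{i,j}$. From this the first identity follows directly:
\[
[e_{i,j},e_{i',j'}]=e_{i,j}e_{i',j'}-e_{i',j'}e_{i,j}=\delta_{j,i'}e_{i,j'}-\delta_{j',i}e_{i',j}=\delta_{j,i'}e_{i,j'}-\delta_{i,j'}e_{i',j}.
\]

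For the second identity I would first compute $[e_{k,k},e_{i,j}]$ using the rule above: $[e_{k,k},e_{i,j}]=\delta_{k,i}e_{k,j}-\delta_{j,k}e_{i,k}=(\delta_{k,i}-\delta_{k,j})e_{i,j}$, where in the last step one uses that the first term is nonzero only when $k=i$ (so $e_{k,j}=e_{i,j}$) and the second only when $k=j$ (so $e_{i,k}=e_{i,j}$). Summing over $k$ from $1$ to $n+1$, and noting that since the indices range over $\{1,\dots,n+2\}$ one has $\delta_{i\le n+1}=1-\delta_{i,n+2}$, exactly $1-\delta_{i,n+2}$ of the first-type terms survive and likewise for the second, so
\[
\Bigl[\sum_{l=1}^{n+1}e_{l,l},\,e_{i,j}\Bigr]=\bigl((1-\delta_{i,n+2})-(1-\delta_{j,n+2})\bigr)e_{i,j}=(\delta_{j,n+2}-\delta_{i,n+2})e_{i,j}.
\]
Since $h_k=e_{k,k}-\tfrac{1}{n+1}\sum_{l=1}^{n+1}e_{l,l}$ and the bracket is linear in its first argument, combining the two computations gives
\[
[h_k,e_{i,j}]=(\delta_{k,i}-\delta_{k,j})e_{i,j}-\tfrac{1}{n+1}(\delta_{j,n+2}-\delta_{i,n+2})e_{i,j}=\Bigl(\delta_{k,i}-\delta_{k,j}+\tfrac{1}{n+1}(\delta_{i,n+2}-\delta_{j,n+2})\Bigr)e_{i,j},
\]
as claimed. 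Finally, $[h_k,h_{k'}]=0$ because each $h_k$ is a diagonal matrix and diagonal matrices commute; alternatively one expands $h_{k'}$ in the $e_{l,l}$ and applies the second identity, where every coefficient vanishes.

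There is no genuine obstacle here: the statement is a bookkeeping exercise in the algebra of matrix units, and the only point requiring a little care is tracking the correction terms $\delta_{\cdot,n+2}$ produced by the normalization $h_k=e_{k,k}-\tfrac{1}{n+1}\sum_{l=1}^{n+1}e_{l,l}$ — in particular remembering that the sum runs only to $n+1$, not $n+2$. This is exactly what yields the $\tfrac{1}{n+1}(\delta_{i,n+2}-\delta_{j,n+2})$ term and keeps the relations $h_{n+1}=-h_1-\cdots-h_n$ and $[\mathfrak{sl}_{n+1},h_{n+2}]=0$ consistent with the formulas.
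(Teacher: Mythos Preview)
Your proof is correct. The paper states this lemma without proof, treating it as an elementary fact about matrix units; your argument supplies exactly the routine verification the authors omit, reducing everything to $e_{i,j}e_{i',j'}=\delta_{j,i'}e_{i,j'}$ and the definition $h_k=e_{k,k}-\tfrac{1}{n+1}\sum_{l=1}^{n+1}e_{l,l}$.
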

We denote by $\ca{U}(\mathfrak{g})$ the universal enveloping algebra of a Lie algebra $\mathfrak{g}$. From the above lemma, we can prove the following useful formulas.

\begin{lemma}
\label{lemma2}
In $\ca{U}(\mathfrak{sl}_{n+2})$ we have the following relations: for $1 \leq i\neq k \leq n+1$
\begin{align*}
 e_{i,n+2}e_{n+2,k}^{m}&= e_{n+2,k}^{m}e_{i,n+2}+me_{n+2,k}^{m-1}e_{i,k}, \\
 e_{i,n+2}e_{n+2,i}^{m}&= e_{n+2,i}^{m}e_{i,n+2}+me_{n+2,i}^{m-1}(h_{i}-h_{n+2}-m+1), \\
 e_{i,k}e_{n+2,i}^m&= e_{n+2,i}^me_{i,k}-me_{n+2,k}e_{n+2,i}^{m-1}, \\
 h_ke_{n+2,i}^m&= e_{n+2,i}^m(h_k+\frac{m}{n+1}), \\
 h_ie_{n+2,i}^m&= e_{n+2,i}^m(h_i-\frac{mn}{n+1}),\\
 h_{n+2}e_{n+2,i}^m&= e_{n+2,i}^m(h_{n+2}+\frac{n+2}{n+1}m).
\end{align*}
\end{lemma}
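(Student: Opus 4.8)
The plan is to prove all six identities by a single induction on $m$, the case $m=1$ (with $m=0$ trivial) being read off directly from the bracket formulas in Lemma~\ref{lemma1}. It pays to dispatch the three identities involving the $h$'s first, since each of them rests on the observation that the relevant bracket is a \emph{scalar} multiple of $e_{n+2,i}$: for $1\le i\ne k\le n+1$ Lemma~\ref{lemma1} gives $[h_k,e_{n+2,i}]=\tfrac{1}{n+1}e_{n+2,i}$, $[h_i,e_{n+2,i}]=-\tfrac{n}{n+1}e_{n+2,i}$, and (taking the Cartan index $n+2$) $[h_{n+2},e_{n+2,i}]=\tfrac{n+2}{n+1}e_{n+2,i}$. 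A one-line induction that commutes the scalar past each successive factor of $e_{n+2,i}$ then yields the fourth, fifth, and sixth formulas, respectively.

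For the first and third formulas I would first record the base brackets from Lemma~\ref{lemma1}: since $i\ne k$ and $k\le n+1$ one has $[e_{i,n+2},e_{n+2,k}]=e_{i,k}$ and $[e_{i,k},e_{n+2,i}]=-e_{n+2,k}$. The point that makes the induction telescope cleanly is that $e_{i,k}$ commutes with $e_{n+2,k}$ and that $e_{n+2,k}$ commutes with $e_{n+2,i}$ (all the relevant brackets vanish by Lemma~\ref{lemma1} because $i,k\le n+1$ and $i\ne k$). Thus in the step $e_{i,n+2}e_{n+2,k}^{m}=e_{n+2,k}\bigl(e_{i,n+2}e_{n+2,k}^{m-1}\bigr)+e_{i,k}e_{n+2,k}^{m-1}$ one substitutes the inductive hypothesis and simply reads off that the coefficient of $e_{n+2,k}^{m-1}e_{i,k}$ becomes $(m-1)+1=m$; the third formula is the same two-term telescoping, now multiplying on the right by $e_{n+2,i}$ and using $e_{n+2,i}^{m-1}e_{n+2,k}=e_{n+2,k}e_{n+2,i}^{m-1}$.

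The only formula requiring genuine care is the second. Here $[e_{i,n+2},e_{n+2,i}]=e_{i,i}-e_{n+2,n+2}=h_i-h_{n+2}$, the last equality holding because the $\tfrac{1}{n+1}\sum_{j\le n+1}e_{j,j}$ summands in $h_i$ and $h_{n+2}$ cancel. Writing $e_{i,n+2}e_{n+2,i}^{m}=e_{n+2,i}\bigl(e_{i,n+2}e_{n+2,i}^{m-1}\bigr)+(h_i-h_{n+2})e_{n+2,i}^{m-1}$, one must push the factor $h_i-h_{n+2}$ through $e_{n+2,i}^{m-1}$, and this is exactly where the already-established fifth and sixth formulas enter: they combine to give $(h_i-h_{n+2})e_{n+2,i}^{m-1}=e_{n+2,i}^{m-1}\bigl(h_i-h_{n+2}-2(m-1)\bigr)$, the integer shift arising because $\tfrac{n}{n+1}+\tfrac{n+2}{n+1}=2$. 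Feeding in the inductive hypothesis for $e_{i,n+2}e_{n+2,i}^{m-1}$ then reduces the claim to the scalar identity $(m-1)\bigl(h_i-h_{n+2}-m+2\bigr)+\bigl(h_i-h_{n+2}-2(m-1)\bigr)=m\bigl(h_i-h_{n+2}-m+1\bigr)$, which one checks by comparing the coefficient of $h_i-h_{n+2}$ (namely $(m-1)+1=m$) and the constant term (namely $-(m-1)(m-2)-2(m-1)=-m(m-1)$). I expect this bookkeeping with the $\tfrac{1}{n+1}$-coefficients — and the resulting clean integer shift — to be the only real (and still minor) obstacle; the remaining arguments are routine telescoping inductions.
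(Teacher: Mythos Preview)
Your proposal is correct and follows essentially the same route as the paper: both argue by induction on $m$, reading the base case from Lemma~\ref{lemma1} and verifying the step via a one-factor commutation. The only cosmetic differences are that you peel the extra factor of $e_{n+2,i}$ (or $e_{n+2,k}$) from the left in the first two identities while the paper peels it from the right, and you establish the $h$-identities first so as to invoke the full $(m-1)$-step shift $(h_i-h_{n+2})e_{n+2,i}^{m-1}=e_{n+2,i}^{m-1}(h_i-h_{n+2}-2(m-1))$ in the second formula, whereas the paper only ever needs the single-step shift $(h_i-h_{n+2})e_{n+2,i}=e_{n+2,i}(h_i-h_{n+2}-2)$ since it multiplies on the right.
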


\begin{proof}
We prove this lemma by induction on $m$. For $m=1$, the equations are just the first two equations in Lemma \ref{lemma1}. Now suppose that the equations hold for $m$, then for $1\leq i\neq k\leq n+1$,
\begin{align*}
e_{i,n+2}e_{n+2,k}^{m+1}&=e_{i,n+2}e_{n+2,k}^{m}e_{n+2,k}\\
&=(e_{n+2,k}^{m}e_{i,n+2}+me_{n+2,k}^{m-1}e_{i,k})e_{n+2,k}\\
&=e_{n+2,k}^{m}e_{i,n+2}e_{n+2,k}+me_{n+2,k}^{m}e_{i,k}\\
&=e_{n+2,k}^{m}(e_{n+2,k}e_{i,n+2}+e_{i,k})+me_{n+2,k}^{m}e_{i,k}\\
&=e_{n+2,k}^{m+1}e_{i,n+2}+(m+1)e_{n+2,k}^{m}e_{i,k};\\
\\
e_{i,n+2}e_{n+2,i}^{m+1}&=e_{i,n+2}e_{n+2,i}^{m}e_{n+2,i}\\
&=[e_{n+2,i}^{m}e_{i,n+2}+me_{n+2,i}^{m-1}(h_{i}-h_{n+2}-m+1)]e_{n+2,i}\\
&=e_{n+2,i}^{m}e_{i,n+2}e_{n+2,i}+me_{n+2,i}^{m-1}(h_{i}-h_{n+2}-m+1)e_{n+2,i}\\
&=e_{n+2,i}^{m}(e_{n+2,i}e_{i,n+2}+h_i-h_{n+2})+me_{n+2,i}^{m}(h_i-h_{n+2}-m-1)\\
&=e_{n+2,i}^{m+1}e_{i,n+2}+(m+1)e_{n+2,i}^{m}(h_{i}-h_{n+2}-m);\\
\\
e_{i,k}e_{n+2,i}^{m+1}&=e_{i,k}e_{n+2,i}^me_{n+2,i}\\
&=e_{n+2,i}^me_{i,k}e_{n+2,i}-me_{n+2,k}e_{n+2,i}^m\\
&=e_{n+2,i}^{m+1}e_{i,k}-e_{n+2,i}^{m}e_{n+2,k}-me_{n+2,k}e_{n+2,i}^m\\
&=e_{n+2,i}^{m+1}e_{i,k}-(m+1)e_{n+2,k}e_{n+2,i}^{m};\\
\\
h_ke_{n+2,i}^{m+1}&= h_ke_{n+2,i}^me_{n+2,i}\\
&=e_{n+2,i}^m(h_k+\frac{m}{n+1})e_{n+2,i}\\
&=e_{n+2,i}^{m+1}(h_k+\frac{1}{n+1}+\frac{m}{n+1})\\
&=e_{n+2,i}^{m+1}(h_k+\frac{m+1}{n+1});\\
\\
h_ie_{n+2,i}^{m+1}&=h_ie_{n+2,i}^me_{n+2,i}\\
&=e_{n+2,i}^m(h_i-\frac{mn}{n+1})e_{n+2,i}\\
&=e_{n+2,i}^{m+1}(h_i-\frac{n}{n+1}-\frac{mn}{n+1})\\
&=e_{n+2,i}^{m+1}(h_i-\frac{n(m+1)}{n+1});\\
\\
h_{n+2}e_{n+2,i}^{m+1}&=h_{n+2}e_{n+2,i}^me_{n+2,i}\\
&=e_{n+2,i}^m(h_{n+2}+\frac{n+2}{n+1}m)e_{n+2,i}\\
&=e_{n+2,i}^{m+1}(h_{n+2}+\frac{n+2}{n+1}+\frac{n+2}{n+1}m)\\
&=e_{n+2,i}^{m+1}(h_{n+2}+\frac{n+2}{n+1}(m+1)).
\end{align*}
\end{proof}

We recall the following fact about the classification of $\uhn$-free $\fr{sl}_{n+1}$-modules of rank~$1$.

\begin{lemma}
\label{classi}
 Let $S\subseteq\{1,2,\cdots,n+1\}, b\in\bbc$ and $\mathbf{a}=(a_1,\cdots,a_n,1)\in(\bbc^*)^{n+1}$.  Denote by $V(\a,S,b)$ the vector space $\bb{C}[h_{1}, \ldots, h_{n}]$
equipped with the $\fr{sl}_{n+1}$-module structure
\begin{align*}
h_i\cdot f&=h_if,  \\
e_{i,j}\cdot f&=a_ia_j^{-1}(\delta_{i\in S}+\delta_{i\not\in S}(h_i-b-1))(\delta_{j\in S}(h_j-b)+\delta_{j\not\in S})\sigma_{i}\sigma_j^{-1}(f),
\end{align*}
for all $f\in \bb{C}[h_{1}, \ldots, h_{n}]$ and  all $1\leq i\neq j\leq n+1$ where $\sigma_i$ $(1\leq i\leq n)$ is the algebra automorphism of $\bbc[h_1,\cdots,h_n]$ defined by mapping $h_k$ to $h_k-\delta_{k,i}$ while $\sigma_{n+1}$ is the identity map.
Then the set of  $\fr{sl}_{n+1}$-modules that are $\uhn$-free of rank $1$ is
\[\{V(\a,S,b) | S\subseteq\{1,2,\cdots,n+1\}, b\in\bbc,\mathbf{a}=(a_1,\cdots,a_n,1)\in(\bbc^*)^{n+1}\}.\]
   Moreover, $V(\a,S,b)$ is simple if and only if $1 \leq |S| \leq n$ or $(n+1)b \notin \bb{Z}_{+}$.
\end{lemma}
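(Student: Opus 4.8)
The plan is to establish the three assertions in turn: that the displayed operators define an $\fr{sl}_{n+1}$-action, that every $\uhn$-free module of rank $1$ arises in this way, and then the simplicity criterion.

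I would dispose of the first assertion by direct verification. Writing the action of $e_{i,j}$ as ``multiplication by the polynomial coefficient $p_{i,j}$ occurring in the displayed formula, followed by the algebra automorphism $\sigma_i\sigma_j^{-1}$ of $\poln$'', each relation of Lemma \ref{lemma1} restricted to $\fr{sl}_{n+1}$ (equivalently, the Chevalley--Serre relations for $e_{i,i+1},e_{i+1,i},h_i$) becomes, after pushing the automorphisms past the multiplication operators, a polynomial identity in $h_1,\dots,h_n$; these can be checked by distinguishing the few cases according to which of the indices involved lie in $S$. This is routine though tedious, and in practice only the Chevalley--Serre relations need to be treated.

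For the classification, let $M$ be $\uhn$-free of rank $1$; fixing a free generator identifies $M$ with $\poln$ on which each $h_k$ acts by multiplication. From $[h_k,e_{i,j}]=(\delta_{k,i}-\delta_{k,j})e_{i,j}$ inside $\fr{sl}_{n+1}$ one obtains $e_{i,j}\cdot(h_k f)=(h_k-\delta_{k,i}+\delta_{k,j})(e_{i,j}\cdot f)$ for all $f$; applying $\sigma_i^{-1}\sigma_j$ to both sides shows that $f\mapsto\sigma_i^{-1}\sigma_j(e_{i,j}\cdot f)$ is $\poln$-linear, hence $e_{i,j}\cdot f=p_{i,j}\,\sigma_i\sigma_j^{-1}(f)$ with $p_{i,j}:=e_{i,j}\cdot 1\in\poln$. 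Thus $M$ is encoded by the tuple $(p_{i,j})_{1\le i\neq j\le n+1}$, and it remains to solve the functional equations coming from the remaining relations. The relations $[e_{i,j},e_{j,k}]=e_{i,k}$ for pairwise distinct $i,j,k$, $[e_{i,j},e_{k,l}]=0$ for disjoint index pairs, and $[e_{i,j},e_{j,i}]=h_i-h_j$ translate into multiplicative and ``separable'' equations for the $p_{i,j}$; analysing these should show first that $p_{i,j}$ factors as a product of a factor depending only on $i$ and one depending only on $j$, up to a scalar two-cocycle that can be normalised away as $a_ia_j^{-1}$, and then, using the last relation, that each of these one-variable factors is either $1$ or an affine polynomial, with all the shift constants forced to coincide in the single parameter $b$ and the binary choices recorded by the set $S$. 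I expect this to be the main obstacle: one has to see that these rigid equations admit precisely the solutions $(\a,S,b)$, in particular that the ``$h_i-b-1$'' and ``$h_j-b$'' factors must carry the same $b$ and that no mixed solutions occur.

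For simplicity, observe first that any submodule $N$ of $V(\a,S,b)$ is stable under multiplication by every $h_k$, hence is an ideal of $\poln$, and, being $e_{i,j}$-stable, satisfies $p_{i,j}\,\sigma_i\sigma_j^{-1}(N)\subseteq N$ for all $i\neq j$. If $1\le|S|\le n$, pick $i\in S$ and $j\notin S$; then $p_{i,j}=a_ia_j^{-1}$ is a nonzero constant, so $N$ is stable under the automorphism $\sigma_i\sigma_j^{-1}$, and varying the choice of $i,j$ one gets enough commuting shift-invariances to conclude that a nonzero ideal of $\poln$ must be all of $\poln$; hence $V(\a,S,b)$ is simple. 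If $|S|\in\{0,n+1\}$ no such ``mixing'' operator is available, and $V(\a,S,b)$ is, up to a rescaling, closely related to a dehomogenised polynomial representation of $\fr{sl}_{n+1}$; here one shows directly that a proper nonzero submodule exists exactly when the integrality condition $(n+1)b\in\bb{Z}_+$ holds, by exhibiting an explicit $\sigma$-stable ideal of $\poln$ invariant under every $e_{i,j}$ whose quotient is the finite-dimensional simple $\fr{sl}_{n+1}$-module of dimension $\binom{(n+1)b+n}{n}$, and conversely, when $(n+1)b\notin\bb{Z}_+$, by a support/divisibility argument on ideals ruling out any proper submodule. I expect this boundary case to be the delicate point, as it is there that the precise integrality condition in the statement has to be pinned down. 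Combining the three cases gives the stated criterion.
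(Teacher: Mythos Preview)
The paper does not prove this lemma at all: its entire proof is the sentence ``This is a reformulation of Proposition~28, Theorem~29, and Proposition~31 of \cite{Ni1}.'' So you are taking a genuinely different route by attempting to reproduce the content of \cite{Ni1} from scratch rather than citing it.

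Your plan is the natural one and matches in outline what \cite{Ni1} does. The reduction to operators of the form $f\mapsto p_{i,j}\,\sigma_i\sigma_j^{-1}(f)$ is correct, and your simplicity argument for $1\le|S|\le n$ can indeed be completed: pick $i\in S$, $j\notin S$ so that $p_{i,j}$ is a nonzero scalar; using these constant-coefficient shifts together with the ideal property one reduces the $h$-degree of a nonzero element of a submodule variable by variable until a nonzero constant is reached. The point you flag as the ``main obstacle'' in the classification step is exactly where the work lies: the relations $[e_{i,j},e_{j,i}]=h_i-h_j$ and $[e_{i,j},e_{j,k}]=e_{i,k}$ do force the factorisation and the common shift $b$, but pinning this down carefully takes several pages in \cite{Ni1} and is not something one can wave through. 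Likewise, the boundary case $|S|\in\{0,n+1\}$ requires identifying the module with (a twist of) the natural action on polynomials in $n+1$ variables restricted to a fixed total degree, and reading off exactly when $(n+1)b\in\bb{Z}_+$ produces a finite-dimensional quotient; your sketch here is in the right direction but is the part most in need of detail.

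In short: your proposal is sound as a plan and would, if carried out, amount to redoing the relevant portion of \cite{Ni1}. For the purposes of the present paper, however, a citation is what is expected, and that is all the authors provide.
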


\begin{proof}
 This is a reformulation of Proposition~28, Theorem~29, and Proposition~31 of \cite{Ni1}.
\end{proof}
Now let $\fr{a}'=\fr{sl}_{n+1}(\bbc)+\bbc h_{n+2}$ (where $\fr{sl}_{n+1}(\bbc)$ is realized
as the upper left subalgebra of $\fr{sl}_{n+2}(\bbc)$) and let $M$ be an $\fr{a}'$-module whose restriction to $\fr{sl}_{n+1}(\bbc)$ is $\ca{U}(\fr{h}_n)$-free of rank 1. Since $h_{n+2}$ is central in $\fr{a}'$,
we know that $h_{n+2}$ acts on $M$ as a scalar $\lambda$ and $M$ is isomorphic to some $V(\a,S,b)$ as $\fr{sl}_{n+1}$-modules by the above lemma. Denote this $\fr{a}'$-module $M$ by $V(\a,S,b,\lambda)$.

Let $\fr{p}=\fr{a}'+\sum\limits_{i=1}^{n+1}\bbc e_{i,n+2}$. In this paper, we will study the generalized Verma module $M_\fr{p}(V(\a,S,b,\lambda))$ over $\fr{sl}_{n+2}$ induced from the simple $\fr{a}'$-module $V(\a,S,b,\lambda)$. Clearly, as vector spaces we have $$M_\fr{p}(V(\a,S,b,\lambda))\cong\bbc[e_{n+2,1},\cdots,e_{n+2,n+1}]\otimes\bbc[h_1,\cdots,h_n].$$

Note that $e_{n+2,1},\cdots,e_{n+2,n+1}$ commute. We consider elements of $M_\fr{p}(V(\a,S,b,\lambda))$ as polynomials in the $n+1$ variables $e_{n+2,1}, \ldots, e_{n+2,n+1}$ with coefficients (on the right side) in $\bb{C}[h_{1}, \ldots, h_{n}]$. To simplify notation we write $\ol{m}:=(m_{1}, \ldots ,m_{n+1}) \in \bb{Z}_+^{n+1}$,
and for $1 \leq k \leq n+1$ we let $\epsilon_{k}=(0, \ldots, 0, 1, 0, \ldots, 0)$ ($1$ in the $k$-th position).
Let $|\ol{m}|=m_{1}+ \cdots + m_{n+1}, \ol{m}!:=m_1!\cdots m_{n+1}!$. We also define
\begin{align*}
\a^{\ol{m}}&:=a_1^{m_1}\cdots a_{n+1}^{m_{n+1}}, \qquad \forall\, \a=(a_1,\cdots,a_{n+1})\in(\bbc^*)^{n+1},\\
E^{\ol{m}}&:=e_{n+2,1}^{m_{1}}e_{n+2,2}^{m_{2}} \cdots e_{n+2,n+1}^{m_{n+1}}.
\end{align*}
Any nonzero element $v$ in $M_{\fr{p}}(V(\a,S,b,\lambda))$ can be uniquely written as
\[
v=\sum\limits_{k=0}^{N}\sum\limits_{|\ol{m}|=k}E^{\ol{m}}P_{\ol{m}}=\sum\limits_{k=0}^Nv_k,
\]
where $P_{\ol{m}}\in\bbc[h_1,\cdots,h_n]$ and $v_k=\sum\limits_{|\ol{m}|=k}E^{\ol{m}}P_{\ol{m}}$ with $v_N\neq0$. We call $N$ the degree of $v$ and $v_k$ is homogeneous of degree $k$.
The following lemma tells us how the elements $e_{i,n+2}(1\leq i\leq n+1)$ act on an arbitrary homogeneous element in $M_{\fr{p}}(V(\a,S,b,\lambda))$ of degree $N$.
\begin{lemma}
\label{lemma3}
For fixed $N$, let
\[v=\sum_{|\ol{m}|=N} E^{\ol{m}}P_{\ol{m}},\] where $P_{\ol{m}} \in \bb{C}[h_{1}, \ldots, h_{n}]$.
For $1\leq i\leq n+1$, we then have
\[e_{i,n+2} \cdot v=\sum_{|\ol{m}|=N-1}E^{\ol{m}} \Big( \sum_{k\neq i}(m_{k}+1) (e_{i,k} \cdot P_{\ol{m}+\e_{k}})
+(m_{i}+1)(h_{i}-\lambda-N+1)P_{\ol{m}+\e_{i}} \Big).\]
\end{lemma}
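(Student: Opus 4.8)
The plan is to compute $e_{i,n+2}\cdot v$ by moving the generator $e_{i,n+2}$ past each power $e_{n+2,k}^{m_k}$ in turn, using the commutation formulas from Lemma~\ref{lemma2}, and then let the surviving $\fr{sl}_{n+1}$- and $h_{n+2}$-terms act on the coefficient $P_{\ol m}\in\bbc[h_1,\ldots,h_n]\subset V(\a,S,b,\lambda)$. Fix a multi-index $\ol m$ with $|\ol m|=N$ and recall $E^{\ol m}=e_{n+2,1}^{m_1}\cdots e_{n+2,n+1}^{m_{n+1}}$. The key point is that the relevant relations in Lemma~\ref{lemma2} are of two shapes: for $k\neq i$ we have $e_{i,n+2}e_{n+2,k}^{m}=e_{n+2,k}^{m}e_{i,n+2}+m\,e_{n+2,k}^{m-1}e_{i,k}$, which produces a term that drops the degree in the $k$-th variable by one and leaves behind an element $e_{i,k}\in\fr{sl}_{n+1}$; and for $k=i$ we have $e_{i,n+2}e_{n+2,i}^{m}=e_{n+2,i}^{m}e_{i,n+2}+m\,e_{n+2,i}^{m-1}(h_i-h_{n+2}-m+1)$. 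Since eventually $e_{i,n+2}$ reaches $1\otimes V(\a,S,b,\lambda)$ and $e_{i,n+2}$ annihilates $V$ (as $e_{i,n+2}\in\fr{n}$ and $\fr{n}V=0$), only the commutator remainders survive.

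First I would carry out the pass of $e_{i,n+2}$ through $E^{\ol m}$ explicitly. Writing $E^{\ol m}=e_{n+2,1}^{m_1}\cdots e_{n+2,n+1}^{m_{n+1}}$ and pushing $e_{i,n+2}$ from the left, each factor $e_{n+2,k}^{m_k}$ with $k\neq i$ contributes, via the first relation, a summand $m_k\,e_{n+2,1}^{m_1}\cdots e_{n+2,k}^{m_k-1}\cdots e_{n+2,n+1}^{m_{n+1}}\,e_{i,k}$ (the factor $e_{i,k}$ then commutes to the right past the remaining $e_{n+2,j}$ with $j\neq i$, by the third relation of Lemma~\ref{lemma2}; one must track the extra $-e_{n+2,k}e_{n+2,i}^{m-1}$ type corrections, but these are absorbed into the same collection since the $e_{n+2,*}$ all commute), while the factor $e_{n+2,i}^{m_i}$ contributes $m_i\,E^{\ol m-\e_i}(h_i-h_{n+2}-m_i+1)$. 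Re-indexing by setting $\ol m'=\ol m-\e_k$ (so that the coefficient $P_{\ol m}$ attaches to the term $E^{\ol m'}$ with $\ol m=\ol m'+\e_k$, and $m_k+1$ replaces $m_k$), and using that $h_{n+2}$ acts as the scalar $\lambda$ and $N-1=|\ol m'|$, the contributions reassemble exactly into
\[
\sum_{|\ol m|=N-1}E^{\ol m}\Big(\sum_{k\neq i}(m_k+1)(e_{i,k}\cdot P_{\ol m+\e_k})+(m_i+1)(h_i-\lambda-N+1)P_{\ol m+\e_i}\Big),
\]
which is the claimed formula.

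The main obstacle is bookkeeping: one must be careful that after producing an $e_{i,k}$ from the $k$-th slot, commuting it rightward past the remaining $e_{n+2,j}$'s (using $e_{i,k}e_{n+2,i}^m=e_{n+2,i}^m e_{i,k}-m\,e_{n+2,k}e_{n+2,i}^{m-1}$) does not generate spurious terms of the wrong degree. In fact the correction term $-m\,e_{n+2,k}e_{n+2,i}^{m-1}$ is again a monomial of total degree $N-1$ in the $e_{n+2,*}$, so it simply gets collected; and because the $e_{n+2,*}$ pairwise commute, the order in which one passes $e_{i,n+2}$ through the factors is immaterial, so no ambiguity arises. A clean way to avoid the explicit reshuffling altogether is to argue by induction on $N$: the case $N=0$ is the statement $e_{i,n+2}\cdot V=0$; for the inductive step write $v=\sum_{k}e_{n+2,k}\,w_k$ with each $w_k$ homogeneous of degree $N-1$, apply the two relevant identities of Lemma~\ref{lemma2} once to move $e_{i,n+2}$ past the single leading $e_{n+2,k}$, and then invoke the induction hypothesis on $e_{i,n+2}\cdot w_k$. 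Either route reduces the problem to the mechanical identities already proved, so the lemma follows.
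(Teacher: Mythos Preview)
Your approach is essentially the paper's: push $e_{i,n+2}$ through $E^{\ol m}$ via Lemma~\ref{lemma2}, use $e_{i,n+2}P_{\ol m}=0$, and reindex $\ol m\mapsto \ol m-\e_k$. The one difference is tactical: the paper exploits commutativity of the $e_{n+2,j}$ to write $E^{\ol m}=e_{n+2,i}^{m_i}\prod_{j\ne i}e_{n+2,j}^{m_j}$ and commutes $e_{i,n+2}$ past $e_{n+2,i}^{m_i}$ \emph{first}. With that ordering, each $e_{i,k}$ produced afterwards (for $k\ne i$) never meets an $e_{n+2,i}$-power, so the correction terms $-m_ie_{n+2,k}e_{n+2,i}^{m_i-1}$ you flag simply do not arise, and the diagonal factor $(h_i-h_{n+2}-m_i+1)$ is then pushed past all of $\prod_{j\ne i}e_{n+2,j}^{m_j}$, directly yielding $h_i-\lambda-N+1$. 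In your left-to-right ordering the corrections are genuinely present and must be combined with the diagonal term: they contribute the missing $-m_i\sum_{k<i}m_k$, while commuting $(h_i-h_{n+2})$ past the factors with $j>i$ contributes $-m_i\sum_{j>i}m_j$, and only together do these give $m_i(h_i-\lambda-N+1)$. Your ``simply gets collected'' hides this short but necessary check; the paper's ordering (or your suggested induction on $N$) sidesteps it entirely.
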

\begin{proof}
We compute
\begin{align*}
 &e_{i,n+2} \cdot v= \sum_{|\ol{m}|=N} e_{i,n+2} \cdot E^{\ol{m}}P_{\ol{m}}\\
=&\sum_{|\ol{m}|=N} \Big(    e_{i,n+2} \cdot e_{n+2,i}^{m_{i}}(\prod_{j \neq i}e_{n+2,j}^{m_{j}})P_{\ol{m}}   \Big)\\
=&\sum_{|\ol{m}|=N} \Big(    (e_{n+2,i}^{m_{i}}e_{i,n+2}+m_{i}e_{n+2,i}^{m_{i}-1}(h_{i}-h_{n+2}-m_{i}+1))\cdot(\prod_{j \neq i}e_{n+2,j}^{m_{j}})P_{\ol{m}}   \Big)\\
=&\sum_{|\ol{m}|=N}     (e_{n+2,i}^{m_{i}}e_{i,n+2} \cdot(\prod_{j \neq i}e_{n+2,j}^{m_{j}})P_{\ol{m}})
+\sum_{|\ol{m}|=N}m_{i}e_{n+2,i}^{m_{i}-1}(h_{i}-h_{n+2}-m_{i}+1)\cdot(\prod_{j \neq i}e_{n+2,j}^{m_{j}})P_{\ol{m}}. \\
\end{align*}
 We compute the two sums separately:
\begin{align*}
\sum_{|\ol{m}|=N}&(e_{n+2,i}^{m_{i}}e_{i,n+2} \cdot(\prod_{j \neq i}e_{n+2,j}^{m_{j}})P_{\ol{m}})  \\
&=\sum_{|\ol{m}|=N}(e_{n+2,i}^{m_{i}} (\prod_{j \neq i}e_{n+2,j}^{m_{j}})e_{i,n+2}P_{\ol{m}})
+\sum_{|\ol{m}|=N}(e_{n+2,i}^{m_{i}} [e_{i,n+2},\prod_{k \neq i}e_{n+2,k}^{m_{k}}]P_{\ol{m}})  \\
&= 0  +\sum_{|\ol{m}|=N}e_{n+2,i}^{m_{i}}\sum_{k} [e_{i,n+2},e_{n+2,k}^{m_{k}}] (\prod_{j \neq i,k}e_{n+2,j}^{m_{j}})P_{\ol{m}}) \\
&=\sum_{|\ol{m}|=N}e_{n+2,i}^{m_{i}}\sum_{k} (m_{k}e_{n+2,k}^{m_{k}-1}e_{i,k}) (\prod_{j \neq i,k}e_{n+2,j}^{m_{j}})P_{\ol{m}}  \\
&=\sum_{|\ol{m}|=N}e_{n+2,i}^{m_{i}}\sum_{k} (m_{k}e_{n+2,k}^{m_{k}-1}) (\prod_{j \neq i,k}e_{n+2,j}^{m_{j}})(e_{i,k} \cdot P_{\ol{m}})  \\
&=\sum_{|\ol{m}|=N}\sum_{k\neq i} m_{k} E^{\ol{m}-\e_{k}}(e_{i,k} \cdot P_{\ol{m}})  \\
&=\sum_{k\neq i} \big (\sum_{|\ol{m}'|=N} m_{k}' E^{\ol{m'}-\e_{k}}(e_{i,k} \cdot P_{\ol{m'}}) \big). \\
\end{align*}
In the last step we just changed name for the summation variable $\ol{m}$.
We now introduce a new variable $\ol{m}:=\ol{m'}-\e_{k}$ in the inner sum. The above expression becomes
\begin{align*}
\sum_{k\neq i} \sum_{|\ol{m}|=N-1} (m_{k}+1) E^{\ol{m}}(e_{i,k} \cdot P_{\ol{m}+\e_{k}})=\sum_{|\ol{m}|=N-1}E^{\ol{m}} \Big( \sum_{k\neq i}(m_{k}+1) (e_{i,k} \cdot P_{\ol{m}+\e_{k}}) \Big). \\
\end{align*}

We now turn to the second term. Note first that for $1 \leq i\neq j \leq n+1$, we have $(h_{i}-h_{n+2})e_{n+2,j}=e_{n+2,j}(h_{i}-h_{n+2}-1)$. Using this relation
many times we can calculate

\begin{align*}
 \sum_{|\ol{m}|=N}&m_{i}e_{n+2,i}^{m_{i}-1}(h_{i}-h_{n+2}-m_{i}+1)\cdot(\prod_{j \neq i}e_{n+2,j}^{m_{j}})P_{\ol{m}}\\
&=\sum_{|\ol{m}|=N}m_{i}e_{n+2,i}^{m_{i}-1}(\prod_{j \neq i}e_{n+2,j}^{m_{j}})(h_{i}-h_{n+2}-m_{i}+1-\sum_{j\neq i}m_{j})P_{\ol{m}}\\
&=\sum_{|\ol{m}|=N}m_{i}E^{\ol{m}-\e_{i}}(h_{i}-\lambda-N+1)P_{\ol{m}}\\
&=\sum_{|\ol{m'}|=N}m_{i}'E^{\ol{m'}-\e_{i}}(h_{i}-\lambda-N+1)P_{\ol{m'}}.\\
\end{align*}
By another change of variables, $\ol{m}:=\ol{m'}-\e_{i}$, this simplifies to

\begin{align*}
\sum_{|\ol{m}|=N-1}&E^{\ol{m}} \big((m_{i}+1)(h_{i}-\lambda-N+1)P_{\ol{m}+\e_{i}}\big).\\
\end{align*}

Substituting any both of these into our original expression we can continue to simplify:

\begin{align*}
 &e_{i,n+2} \cdot v\\
=& (\sum_{|\ol{m}|=N-1}E^{\ol{m}} \Big( \sum_{k\neq i}(m_{k}+1) (e_{i,k} \cdot P_{\ol{m}+\e_{k}}) \Big)
+\sum_{|\ol{m}|=N-1}E^{\ol{m}} \big((m_{i}+1)(h_{i}-\lambda-N+1)P_{\ol{m}+\e_{i}}\big)\\
=&\sum_{|\ol{m}|=N-1}E^{\ol{m}} \Big( \sum_{k\neq i}(m_{k}+1) (e_{i,k} \cdot P_{\ol{m}+\e_{k}})
+(m_{i}+1)(h_{i}-\lambda-N+1)P_{\ol{m}+\e_{i}} \Big),
\end{align*}
which alternatively can be rewritten as
\begin{align*}
 \sum_{|\ol{m}|=N-1}E^{\ol{m}}  \sum_{k=1}^{n+1}(m_{k}+1)\big( (1-\delta_{i,k})(e_{i,k} \cdot P_{\ol{m}+\e_{k}})
+\delta_{i,k}(h_{i}-\lambda-N+1)P_{\ol{m}+\e_{k}}\big).
\end{align*}
This completes the proof.
\end{proof}

Let $\ol{m}\in\bb{Z}_+^{n+1}$ and $N\in\bb{N}$ with $N\geq|\ol{m}|$. For each $S\subseteq\{1,2,\cdots,n+1\}$ and $\a=(a_1,\cdots,a_n,1)\in(\bbc^*)^{n+1}$, define
\begin{align*}
P_{\ol{m}}(S,\a)=&\frac{\mathbf{a}^{\ol{m}}}{\ol{m}!}\prod\limits_{s\not\in S}\prod\limits_{k=1}^{m_s}(h_s-b-k),\\
P'_{\ol{m}}(S,\a)=&\frac{\mathbf{a}^{\ol{m}}}{\ol{m}!}\prod\limits_{\substack{s\not\in S\\ s\neq n+1}}\prod\limits_{k=1}^{m_s}(h_s-b-k)(\delta_{n+1\in S}\prod\limits_{r=1}^{N-m_{n+1}}(h_{n+1}-b-1+r)+\delta_{n+1\not\in S})\\
&\cdot\prod\limits_{t=1}^{m_{n+1}}(h_{n+1}+nb-t+1),\\
\Delta_{\ol{m}}(S,\a)=&\frac{\mathbf{a}^{\ol{m}}}{\ol{m}!}\prod\limits_{s\not\in S}\prod\limits_{k=1}^{m_s}(h_s-b-k)\prod\limits_{r=2}^{N-m_{n+1}}(h_{n+1}-b-2+r)\prod\limits_{t=1}^{m_{n+1}}(h_{n+1}+nb-t),\\
\Theta_{\ol{m}}(S,\a)=&\frac{\mathbf{a}^{\ol{m}}}{\ol{m}!}\prod\limits_{\substack{s\not\in S\\ s\neq n+1}}\prod\limits_{k=1}^{m_s}(h_s-b-k)\prod\limits_{k=1}^{m_{n+1}}(h_{n+1}-b-k-1),\\
\Upsilon_{\ol{m}}(S,\a)=&\frac{\mathbf{a}^{\ol{m}}}{\ol{m}!}\prod\limits_{\substack{s\not\in S\\ s\neq n+1}}\prod\limits_{k=1}^{m_s}(h_s-b-k)\prod\limits_{t=1}^{m_{n+1}}(h_{n+1}+nb-t).
\end{align*}

Then we have the following lemma.

\begin{lemma}
\label{lemma7}
Let $\ol{m}\in\bb{Z}_+^{n+1}$, $N\in\bb{N}$ with $N\geq|\ol{m}|$, $S\subseteq\{1,2,\cdots,n+1\}$ and $\a=(a_1,\cdots,a_n,1)\in(\bbc^*)^{n+1}$.
\begin{enumerate}[(1)]
\item If $n+1\in S$, then
\begin{align*}
a_j^{-1}(m_j+1)\sigma_j^{-1}(P_{\ol{m}+\e_j}(S,\a))=&\left\{\begin{array}{ll}
P_{\ol{m}}(S,\a), & 1\leq j\leq n+1, j\in S,\\
(h_j-b)P_{\ol{m}}(S,\a), & 1\leq j\leq n+1, j\not\in S;
\end{array}\right.\\
a_j^{-1}(m_j+1)\sigma_j^{-1}(P'_{\ol{m}+\e_j}(S,\a))=&\left\{\begin{array}{l}
(h_{n+1}-b-1)\Delta_{\ol{m}}(S,\a),  1\leq j\leq n, j\in S,\\
(h_j-b)(h_{n+1}-b-1)\Delta_{\ol{m}}(S,\a), 1\leq j\leq n, j\not\in S,\\
(h_{n+1}+nb)\Delta_{\ol{m}}(S,\a), j=n+1.
\end{array}\right.
\end{align*}
\item If $n+1\not\in S$, then
\begin{align*}
a_j^{-1}(m_j+1)\sigma_j^{-1}(P_{\ol{m}+\e_j}(S,\a))=&\left\{\begin{array}{l}
\Theta_{\ol{m}}(S,\a), \qquad 1\leq j\leq n, j\in S,\\
(h_j-b)\Theta_{\ol{m}}(S,\a), \qquad 1\leq j\leq n, j\not\in S,\\
(h_{n+1}-b-1)\Theta_{\ol{m}}(S,\a), \qquad j=n+1;
\end{array}\right.\\
a_j^{-1}(m_j+1)\sigma_j^{-1}(P'_{\ol{m}+\e_j}(S,\a))=&\left\{\begin{array}{ll}
\Upsilon_{\ol{m}}(S,\a), & 1\leq j\leq n, j\in S,\\
(h_j-b)\Upsilon_{\ol{m}}(S,\a), & 1\leq j\leq n, j\not\in S,\\
(h_{n+1}+nb)\Upsilon_{\ol{m}}(S,\a), & j=n+1.
\end{array}\right.\\
\end{align*}
\end{enumerate}
\end{lemma}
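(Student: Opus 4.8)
The plan is to prove every identity by a direct but carefully organised computation. The observation that makes this manageable is that each of $P_{\ol m}(S,\a)$, $P'_{\ol m}(S,\a)$, $\Delta_{\ol m}(S,\a)$, $\Theta_{\ol m}(S,\a)$, $\Upsilon_{\ol m}(S,\a)$ is the scalar $\frac{\a^{\ol m}}{\ol m!}$ times a product of ``single-variable'' factors, each being a product of linear terms in one of the variables $h_s$ (with $s\notin S$, $s\neq n+1$) or in $h_{n+1}=-h_1-\cdots-h_n$. Replacing $\ol m$ by $\ol m+\e_j$ multiplies the prefactor by $a_j/(m_j+1)$ and modifies by one the upper index of precisely those single-variable products whose range involves $m_j$ (it is raised, except that the $\prod_r$-factors in $P'$ and $\Delta$ have $N-m_{n+1}$ in their upper index and so get lowered when $j=n+1$). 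Since the operator $a_j^{-1}(m_j+1)$ in front cancels the change in the prefactor exactly, the whole problem reduces to tracking how $\sigma_j^{-1}$ acts on finitely many single-variable products, using the elementary facts that for $1\le j\le n$ one has $\sigma_j^{-1}\colon h_j\mapsto h_j+1$, $h_{n+1}\mapsto h_{n+1}-1$ (and $\sigma_j^{-1}$ fixes every other $h_k$), while $\sigma_{n+1}^{-1}=\mathrm{id}$ and $a_{n+1}=1$, so the $j=n+1$ cases involve no automorphism at all.

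Concretely I would first dispose of the $P_{\ol m}$ identities in (1) and (2): only the variables $h_s$ with $s\notin S$ (possibly including $h_{n+1}$) occur, and everything follows from the two reindexing identities $\prod_{k=1}^{M}(h-b-k)\big|_{h\mapsto h+1}=(h-b)\prod_{k=1}^{M-1}(h-b-k)$ and $(h-b-M-1)\prod_{k=1}^{M}(h-b-k)=\prod_{k=1}^{M+1}(h-b-k)$, split according to whether $j\in S$ or $j\notin S$ and whether $j\le n$ or $j=n+1$. For the remaining identities I would use the factorisations $P'_{\ol m}=P_{\ol m}\cdot\prod_{r=1}^{N-m_{n+1}}(h_{n+1}-b-1+r)\prod_{t=1}^{m_{n+1}}(h_{n+1}+nb-t+1)$ and $\Delta_{\ol m}=P_{\ol m}\cdot\prod_{r=2}^{N-m_{n+1}}(h_{n+1}-b-2+r)\prod_{t=1}^{m_{n+1}}(h_{n+1}+nb-t)$, valid when $n+1\in S$, together with the analogues $P'_{\ol m}=P^\flat_{\ol m}\prod_{t=1}^{m_{n+1}}(h_{n+1}+nb-t+1)$, $\Theta_{\ol m}=P^\flat_{\ol m}\prod_{k=1}^{m_{n+1}}(h_{n+1}-b-k-1)$, $\Upsilon_{\ol m}=P^\flat_{\ol m}\prod_{t=1}^{m_{n+1}}(h_{n+1}+nb-t)$ when $n+1\notin S$, where $P^\flat_{\ol m}:=\frac{\a^{\ol m}}{\ol m!}\prod_{s\notin S,\ s\neq n+1}\prod_{k=1}^{m_s}(h_s-b-k)$. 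Combining these with the already-established $P$-identities and with the action of $\sigma_j^{-1}$ on the extra $h_{n+1}$-products — again only reindexing of linear factors, e.g.\ $\prod_{r=1}^{M}(h_{n+1}-b-2+r)=(h_{n+1}-b-1)\prod_{r=2}^{M}(h_{n+1}-b-2+r)$ and $(h_{n+1}+nb-M)\prod_{t=1}^{M}(h_{n+1}+nb-t+1)=(h_{n+1}+nb)\prod_{t=1}^{M}(h_{n+1}+nb-t)$ — yields each of the claimed formulas.

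The only real obstacle is bookkeeping: there are on the order of a dozen sub-cases, distinguished by $n+1\in S$ versus $n+1\notin S$, by $j\in S$ versus $j\notin S$, and by $j\le n$ versus $j=n+1$, and one must be careful that the upper index $N-m_{n+1}$ in the $h_{n+1}$-products changes exactly when $j=n+1$ and that $\sigma_j^{-1}$ shifts $h_{n+1}$ down by one for every $1\le j\le n$. No idea beyond the elementary reindexing identities above is needed.
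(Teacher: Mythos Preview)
Your proposal is correct and is essentially the paper's own approach: the paper's proof is likewise a case-by-case direct computation, expanding $a_j^{-1}(m_j+1)\sigma_j^{-1}$ applied to each of the relevant products and reindexing. Your write-up is organised a bit more efficiently---you isolate the scalar cancellation $\a^{\ol m+\e_j}/(\ol m+\e_j)! = \frac{a_j}{m_j+1}\cdot\a^{\ol m}/\ol m!$ once, record the factorisations $P'_{\ol m}=P_{\ol m}\cdot(\text{extra }h_{n+1}\text{-factors})$ (for $n+1\in S$) and $P'_{\ol m},\Theta_{\ol m},\Upsilon_{\ol m}$ in terms of $P^\flat_{\ol m}$ (for $n+1\notin S$), and reduce everything to two or three elementary reindexing identities---whereas the paper simply writes out each sub-case in full; but the underlying verifications are identical.
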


\begin{proof}
The lemma follows from direct computations.
\begin{enumerate}[(1)]
\item Let $n+1\in S$. For $1\leq j\leq n$ with $j\in S$, we have
\begin{align*}
&a_j^{-1}(m_j+1)\sigma_j^{-1}(P_{\ol{m}+\e_j}(S,\a))\\
=&a_j^{-1}(m_j+1)\sigma_j^{-1}\Big(\frac{\mathbf{a}^{\ol{m}+\e_j}}{(\ol{m}+\e_j)!}\prod\limits_{s\not\in S}\prod\limits_{k=1}^{m_s}(h_s-b-k)\Big)\\
=&\frac{\mathbf{a}^{\ol{m}}}{\ol{m}!}\prod\limits_{s\not\in S}\prod\limits_{k=1}^{m_s}(h_s-b-k)\\
=&P_{\ol{m}}(S,\a),\\
&\\
&a_j^{-1}(m_j+1)\sigma_j^{-1}(P'_{\ol{m}+\e_j}(S,\a))\\
=&a_j^{-1}(m_j+1)\sigma_j^{-1}\Big(\frac{\mathbf{a}^{\ol{m}+\e_j}}{(\ol{m}+\e_j)!}\prod\limits_{s\not\in S}\prod\limits_{k=1}^{m_s}(h_s-b-k)\prod\limits_{r=1}^{N-m_{n+1}}(h_{n+1}-b-1+r)\\
&\cdot\prod\limits_{t=1}^{m_{n+1}}(h_{n+1}+nb-t+1)\Big)\\
=&\frac{\mathbf{a}^{\ol{m}}}{\ol{m}!}\prod\limits_{s\not\in S}\prod\limits_{k=1}^{m_s}(h_s-b-k)\prod\limits_{r=1}^{N-m_{n+1}}(h_{n+1}-b-2+r)\prod\limits_{t=1}^{m_{n+1}}(h_{n+1}+nb-t)\\
=&\frac{\mathbf{a}^{\ol{m}}}{\ol{m}!}(h_{n+1}-b-1)\prod\limits_{s\not\in S}\prod\limits_{k=1}^{m_s}(h_s-b-k)\prod\limits_{r=2}^{N-m_{n+1}}(h_{n+1}-b-2+r)\\
&\cdot\prod\limits_{t=1}^{m_{n+1}}(h_{n+1}+nb-t)\\
=&(h_{n+1}-b-1)\Delta_{\ol{m}}(S,\a).
\end{align*}
For $1\leq j\leq n$ with $j\not\in S$, we have
\begin{align*}
&a_j^{-1}(m_j+1)\sigma_j^{-1}(P_{\ol{m}+\e_j}(S,\a))\\
=&a_j^{-1}(m_j+1)\sigma_j^{-1}\Big(\frac{\mathbf{a}^{\ol{m}+\e_j}}{(\ol{m}+\e_j)!}\prod\limits_{\substack{s\not\in S\\s\neq j}}\prod\limits_{k=1}^{m_s}(h_s-b-k)\prod\limits_{k=1}^{m_j+1}(h_j-b-k)\Big)\\
=&\frac{\mathbf{a}^{\ol{m}}}{\ol{m}!}\prod\limits_{\substack{s\not\in S\\ s\neq j}}\prod\limits_{k=1}^{m_s}(h_s-b-k)\prod\limits_{k=1}^{m_j+1}(h_j-b-k+1)\\
=&\frac{\mathbf{a}^{\ol{m}}}{\ol{m}!}(h_j-b)\prod\limits_{s\not\in S}\prod\limits_{k=1}^{m_s}(h_s-b-k)\\
=&(h_j-b)P_{\ol{m}}(S,\a),\\
&\\
&a_j^{-1}(m_j+1)\sigma_j^{-1}(P'_{\ol{m}+\e_j}(S,\a))\\
=&a_j^{-1}(m_j+1)\sigma_j^{-1}\Big(\frac{\mathbf{a}^{\ol{m}+\e_j}}{(\ol{m}+\e_j)!}\prod\limits_{\substack{s\not\in S\\ s\neq j}}\prod\limits_{k=1}^{m_s}(h_s-b-k)\prod\limits_{k=1}^{m_j+1}(h_j-b-k)\\
&\cdot\prod\limits_{r=1}^{N-m_{n+1}}(h_{n+1}-b-1+r)\prod\limits_{t=1}^{m_{n+1}}(h_{n+1}+nb-t+1)\Big)\\
=&\frac{\mathbf{a}^{\ol{m}}}{\ol{m}!}(h_j-b)\prod\limits_{s\not\in S}\prod\limits_{k=1}^{m_s}(h_s-b-k)\prod\limits_{r=1}^{N-m_{n+1}}(h_{n+1}-b-2+r)\prod\limits_{t=1}^{m_{n+1}}(h_{n+1}+nb-t)\\
=&\frac{\mathbf{a}^{\ol{m}}}{\ol{m}!}(h_j-b)(h_{n+1}-b-1)\prod\limits_{s\not\in S}\prod\limits_{k=1}^{m_s}(h_s-b-k)\prod\limits_{r=2}^{N-m_{n+1}}(h_{n+1}-b-2+r)\\
&\cdot\prod\limits_{t=1}^{m_{n+1}}(h_{n+1}+nb-t)\\
=&(h_j-b)(h_{n+1}-b-1)\Delta_{\ol{m}}(S,\a).
\end{align*}
Finally, for $j=n+1$, we have
\begin{align*}
&a_{n+1}^{-1}(m_{n+1}+1)\sigma_{n+1}^{-1}(P_{\ol{m}+\e_{n+1}}(S,\a))\\
=&a_{n+1}^{-1}(m_{n+1}+1)\frac{\mathbf{a}^{\ol{m}+\e_{n+1}}}{(\ol{m}+\e_{n+1})!}\prod\limits_{s\not\in S}\prod\limits_{k=1}^{m_s}(h_s-b-k)\\
=&\frac{\mathbf{a}^{\ol{m}}}{\ol{m}!}\prod\limits_{s\not\in S}\prod\limits_{k=1}^{m_s}(h_s-b-k)\\ =&P_{\ol{m} }(S,\a),\\
&
\\
&a_{n+1}^{-1}(m_{n+1}+1)\sigma_{n+1}^{-1}(P'_{\ol{m}+\e_{n+1}}(S,\a))\\
=&a_{n+1}^{-1}(m_{n+1}+1)\frac{\mathbf{a}^{\ol{m}+\e_{n+1}}}{(\ol{m}+\e_{n+1})!}\prod\limits_{s\not\in S}\prod\limits_{k=1}^{m_s}(h_s-b-k)\prod\limits_{r=1}^{N-m_{n+1}-1}(h_{n+1}-b-1+r)\\
&\cdot\prod\limits_{t=1}^{m_{n+1}+1}(h_{n+1}+nb-t+1)\\
=&\frac{\mathbf{a}^{\ol{m}}}{\ol{m}!}(h_{n+1}+nb)\prod\limits_{s\not\in S}\prod\limits_{k=1}^{m_s}(h_s-b-k)\prod\limits_{r=2}^{N-m_{n+1}}(h_{n+1}-b-2+r)\prod\limits_{t=1}^{m_{n+1}}(h_{n+1}+nb-t)\\ =&
(h_{n+1}+nb)\Delta_{\ol{m}}(S,\a).\\
\end{align*}
\item Let $n+1\not\in S$. For $1\leq j\leq n$ with $j\in S$, we have
\begin{align*}
&a_j^{-1}(m_j+1)\sigma_j^{-1}(P_{\ol{m}+\e_j}(S,\a))\\
=&a_j^{-1}(m_j+1)\sigma_j^{-1}\Big(\frac{\mathbf{a}^{\ol{m}+\e_j}}{(\ol{m}+\e_j)!}\prod\limits_{\substack{s\not\in S\\ s\neq n+1}}\prod\limits_{k=1}^{m_s}(h_s-b-k)\prod\limits_{k=1}^{m_{n+1}}(h_{n+1}-b-k)\Big)\\
=&\frac{\mathbf{a}^{\ol{m}}}{\ol{m}!}\prod\limits_{\substack{s\not\in S\\ s\neq n+1}}\prod\limits_{k=1}^{m_s}(h_s-b-k)\prod\limits_{k=1}^{m_{n+1}}(h_{n+1}-b-k-1)\\
=&\Theta_{\ol{m}}(S,\a),\\ & \\
&a_j^{-1}(m_j+1)\sigma_j^{-1}(P'_{\ol{m}+\e_j}(S,\a))\\
=&a_j^{-1}(m_j+1)\sigma_j^{-1}\Big(\frac{\mathbf{a}^{\ol{m}+\e_j}}{(\ol{m}+\e_j)!}\prod\limits_{\substack{s\not\in S\\ s\neq n+1}}\prod\limits_{k=1}^{m_s}(h_s-b-k)\prod\limits_{k=1}^{m_{n+1}}(h_{n+1}+nb-k+1)\Big)\\
=&\frac{\mathbf{a}^{\ol{m}}}{\ol{m}!}\prod\limits_{\substack{s\not\in S\\ s\neq n+1}}\prod\limits_{k=1}^{m_s}(h_s-b-k)\prod\limits_{k=1}^{m_{n+1}}(h_{n+1}+nb-k)\\
=&\Upsilon_{\ol{m}}(S,\a).
\end{align*}
For $1\leq j\leq n$ with $j\not\in S$, we have
\begin{align*}
&a_j^{-1}(m_j+1)\sigma_j^{-1}(P_{\ol{m}+\e_j}(S,\a))\\
=&a_j^{-1}(m_j+1)\sigma_j^{-1}\Big(\frac{\mathbf{a}^{\ol{m}+\e_j}}{(\ol{m}+\e_j)!}\prod\limits_{\substack{s\not\in S\\ s\neq j,n+1}}\prod\limits_{k=1}^{m_s}(h_s-b-k)\prod\limits_{k=1}^{m_j+1}(h_j-b-k)\\
&\cdot\prod\limits_{k=1}^{m_{n+1}}(h_{n+1}-b-k)\Big)\\
=&\frac{\mathbf{a}^{\ol{m}}}{\ol{m}!}\prod\limits_{\substack{s\not\in S\\ s\neq j,n+1}}\prod\limits_{k=1}^{m_s}(h_s-b-k)\prod\limits_{k=1}^{m_j+1}(h_j-b-k+1)\prod\limits_{k=1}^{m_{n+1}}(h_{n+1}-b-k-1),\\
=&\frac{\mathbf{a}^{\ol{m}}}{\ol{m}!}(h_j-b)\prod\limits_{\substack{s\not\in S\\ s\neq n+1}}\prod\limits_{k=1}^{m_s}(h_s-b-k)\prod\limits_{k=1}^{m_{n+1}}(h_{n+1}-b-k-1)\\
=&(h_j-b)\Theta_{\ol{m}}(S,\a),\\ &\\
&a_j^{-1}(m_j+1)\sigma_j^{-1}(P'_{\ol{m}+\e_j}(S,\a))\\
=&a_j^{-1}(m_j+1)\sigma_j^{-1}\Big(\frac{\mathbf{a}^{\ol{m}+\e_j}}{(\ol{m}+\e_j)!}\prod\limits_{\substack{s\not\in S\\ s\neq j,n+1}}\prod\limits_{k=1}^{m_s}(h_s-b-k)\prod\limits_{k=1}^{m_j+1}(h_j-b-k+1)\\
&\cdot\prod\limits_{k=1}^{m_{n+1}}(h_{n+1}+nb-k+1)\Big)\\
=&\frac{\mathbf{a}^{\ol{m}}}{\ol{m}!}\prod\limits_{\substack{s\not\in S\\ s\neq j,n+1}}\prod\limits_{k=1}^{m_s}(h_s-b-k)\prod\limits_{k=1}^{m_j+1}(h_j-b-k+1)\prod\limits_{k=1}^{m_{n+1}}(h_{n+1}+nb-k),\\
=&\frac{\mathbf{a}^{\ol{m}}}{\ol{m}!}(h_j-b)\prod\limits_{\substack{s\not\in S\\ s\neq n+1}}\prod\limits_{k=1}^{m_s}(h_s-b-k)\prod\limits_{k=1}^{m_{n+1}}(h_{n+1}+nb-k)\\
=&(h_j-b)\Upsilon_{\ol{m}}(S,\a).
\end{align*}
Now to complete the proof, it remains to verify the equations when $j=n+1\not\in S$.
\begin{align*}
&a_{n+1}^{-1}(m_{n+1}+1)\sigma_{n+1}^{-1}(P_{\ol{m}+\e_{n+1}}(S,\a))\\
=&a_{n+1}^{-1}(m_{n+1}+1)\frac{\mathbf{a}^{\ol{m}+\e_{n+1}}}{(\ol{m}+\e_{n+1})!}\prod\limits_{\substack{s\not\in S\\ s\neq n+1}}\prod\limits_{k=1}^{m_s}(h_s-b-k)\prod\limits_{k=1}^{m_{n+1}+1}(h_{n+1}-b-k)\\
=&\frac{\mathbf{a}^{\ol{m}}}{\ol{m}!}(h_{n+1}-b-1)\prod\limits_{\substack{s\not\in S\\ s\neq n+1}}\prod\limits_{k=1}^{m_s}(h_s-b-k)\prod\limits_{k=1}^{m_{n+1}}(h_{n+1}-b-k-1)\\
=&(h_{n+1}-b-1)\Theta_{\ol{m}}(S,\a),\\ &\\
&a_{n+1}^{-1}(m_{n+1}+1)\sigma_{n+1}^{-1}(P'_{\ol{m}+\e_{n+1}}(S,\a))\\
=&a_{n+1}^{-1}(m_{n+1}+1)\frac{\mathbf{a}^{\ol{m}+\e_{n+1}}}{(\ol{m}+\e_{n+1})!}\prod\limits_{\substack{s\not\in S\\ s\neq n+1}}\prod\limits_{k=1}^{m_s}(h_s-b-k)\prod\limits_{k=1}^{m_{n+1}+1}(h_{n+1}+nb-k+1)\\
=&\frac{\mathbf{a}^{\ol{m}}}{\ol{m}!}(h_{n+1}+nb)\prod\limits_{\substack{s\not\in S\\ s\neq n+1}}\prod\limits_{k=1}^{m_s}(h_s-b-k)\prod\limits_{t=1}^{m_{n+1}}(h_{n+1}+nb-t)\\
=&(h_{n+1}+nb)\Upsilon_{\ol{m}}(S,\a).
\end{align*}
\end{enumerate}
\end{proof}

\section{Proof of the main theorem}
\label{section3}
In this section, we will study the generalized Verma module $M_{\fr{p}}(V(\a,S,b,\lambda))$ over $\mathfrak{sl}_{n+2}$ for the given parameters $ b\in\bbc,\lambda\in\bbc, \a=(a_1,\cdots,a_n,1)\in(\bbc^*)^{n+1}, S\subseteq \{1,2,...,n+1\}$. Indeed, we will give sufficient and necessary conditions for $M_{\fr{p}}(V(\a,S,b,\lambda))$ to be simple. Before proving our main theorem, we need several auxiliary lemmas. First, we have

\begin{lemma}
\label{homlemma}
Let $W\subseteq M_{\fr{p}}(V(\a,S,b,\lambda))$ be a nonzero submodule. Suppose
\[v=\sum\limits_{k=0}^Nv_k\in W,\]
where $v_k$ is homogeneous of degree $k$, then $v_k\in W$  for all $k$.
\end{lemma}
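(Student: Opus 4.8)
The plan is to exploit a grading on $M_{\fr{p}}(V(\a,S,b,\lambda))$ and show that $W$ is a graded submodule. The natural candidate is the grading coming from the action of some element of $\fr{h}$ that distinguishes the degree of a homogeneous element. Concretely, set $H := h_{n+2}$ (or more precisely a suitable shift of it). By Lemma~\ref{lemma2}, $h_{n+2} e_{n+2,i}^{m} = e_{n+2,i}^{m}(h_{n+2} + \frac{n+2}{n+1}m)$, so $h_{n+2}$ commutes with multiplication by $E^{\ol{m}}$ up to the additive constant $\frac{n+2}{n+1}|\ol{m}|$. Since $h_{n+2}$ acts on $1\otimes V(\a,S,b,\lambda)$ as the scalar $\lambda$, it follows that $h_{n+2}$ acts on $E^{\ol{m}}P_{\ol{m}}$ (for $P_{\ol{m}}\in\bbc[h_1,\dots,h_n]$) as the scalar $\lambda + \frac{n+2}{n+1}|\ol{m}|$; in particular $h_{n+2}$ acts on the homogeneous component $v_k$ as the scalar $\lambda + \frac{n+2}{n+1}k$, and these scalars are pairwise distinct for distinct $k$.

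With this in hand the argument is the standard Vandermonde/eigenspace decomposition. Suppose $v = \sum_{k=0}^{N} v_k \in W$ with each $v_k$ homogeneous of degree $k$. First I would note that $W$, being a submodule, is in particular stable under the action of $h_{n+2}\in\fr{a}'\subseteq\fr{p}$. Applying $h_{n+2}$ repeatedly gives
\[
h_{n+2}^{\,j}\cdot v \;=\; \sum_{k=0}^{N}\Big(\lambda + \tfrac{n+2}{n+1}k\Big)^{\!j} v_k \;\in\; W, \qquad j = 0,1,\dots,N.
\]
The $(N+1)\times(N+1)$ coefficient matrix $\big[(\lambda + \frac{n+2}{n+1}k)^{j}\big]_{0\le j,k\le N}$ is a Vandermonde matrix in the $N+1$ distinct nodes $\lambda + \frac{n+2}{n+1}k$, hence invertible. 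Therefore each $v_k$ is a $\bbc$-linear combination of the vectors $h_{n+2}^{\,j}\cdot v\in W$, so $v_k\in W$ for all $k$, which is exactly the claim.

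I do not expect any serious obstacle here; the only thing to be careful about is the bookkeeping showing that $h_{n+2}$ is genuinely scalar on each homogeneous component and that the scalars are distinct (which needs only that $\frac{n+2}{n+1}\neq 0$, true for all $n\ge 0$). One could equally phrase the proof as: the operator $h_{n+2}$ is diagonalizable on $M_{\fr{p}}(V(\a,S,b,\lambda))$ with the homogeneous components as eigenspaces, and any submodule stable under a diagonalizable operator is the direct sum of its intersections with the eigenspaces; the Vandermonde computation is just making this explicit. Either way the verification is routine given Lemma~\ref{lemma2}.
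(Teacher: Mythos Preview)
Your proof is correct and follows essentially the same approach as the paper: both observe that $h_{n+2}$ acts on each homogeneous component $v_k$ as the scalar $\lambda+\frac{n+2}{n+1}k$, and since these scalars are pairwise distinct, the decomposition $v=\sum_k v_k$ is the eigenspace decomposition of $v$ with respect to $h_{n+2}$, forcing each $v_k\in W$. The paper simply asserts this last step in one line, whereas you spell out the Vandermonde argument explicitly.
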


\begin{proof}
Since
\[
h_{n+2}\cdot v_k=(\lambda+\frac{n+2}{n+1}k)v_k,
\]
we see that $v_k$ are weight vectors of different weights with respect to $h_{n+2}$.
Hence, $v_k\in W$ for all $k$.
\end{proof}

\begin{lemma}
\label{lemmasub}
Let $N\in \mathbb{N}$, and let $v=\sum\limits_{|\ol{m}|=N}E^{\ol{m}}P_{\ol{m}}$ be a nonzero homogeneous element in $M_{\fr{p}}(V(\a,S,b,\lambda))$ of degree $N$  where $P_{\ol{m}}\in\bbc[h_1,\cdots,h_n]$. If
\[
e_{i,n+2}\cdot v=0 \quad \text{ for all } \quad 1\leq i\leq n+1,
\]
then $\ca{U}(\fr{sl}_{n+2}(\bbc))v$ is a nonzero proper $\fr{sl}_{n+2}(\bbc)$-submodule of $M_{\fr{p}}(V(\a,S,b,\lambda))$.
\end{lemma}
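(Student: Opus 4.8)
The plan is to work with the natural grading $M:=M_\fr{p}(V(\a,S,b,\lambda))=\bigoplus_{k\ge 0}M^{(k)}$, where $M^{(k)}$ is the linear span of the elements $E^{\ol{m}}P$ with $|\ol{m}|=k$ and $P\in\bbc[h_1,\dots,h_n]$; thus $M^{(0)}=1\otimes V(\a,S,b,\lambda)\neq 0$. By Lemma~\ref{homlemma} (equivalently, by the last formula of Lemma~\ref{lemma2}), $M^{(k)}$ is precisely the eigenspace of $h_{n+2}$ for the eigenvalue $\lambda+\tfrac{n+2}{n+1}k$, and these eigenvalues are pairwise distinct.

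That $\ca{U}(\fr{sl}_{n+2})\,v$ is a nonzero submodule is clear: it contains $v\neq 0$ and is $\fr{sl}_{n+2}$-stable by construction. For properness, decompose $\fr{sl}_{n+2}=\fr{n}^-\oplus\fr{a}'\oplus\fr{n}^+$ with $\fr{n}^-=\bigoplus_{i=1}^{n+1}\bbc e_{n+2,i}$, $\fr{a}'=\fr{sl}_{n+1}+\bbc h_{n+2}$, and $\fr{n}^+=\bigoplus_{i=1}^{n+1}\bbc e_{i,n+2}$ the nilradical of $\fr{p}$. I record three facts. (a) Since $h_{n+2}$ is central in $\fr{a}'$, the subalgebra $\fr{a}'$ preserves each eigenspace $M^{(k)}$ of $h_{n+2}$; in particular $\ca{U}(\fr{a}')\,v\subseteq M^{(N)}$. (b) The subalgebra $\fr{n}^+$ is abelian, so $\ca{U}(\fr{n}^+)\cong\bbc[e_{1,n+2},\dots,e_{n+1,n+2}]$; since the hypothesis gives $e_{i,n+2}\cdot v=0$ for all $i$, the entire augmentation ideal of $\ca{U}(\fr{n}^+)$ annihilates $v$, i.e. $\ca{U}(\fr{n}^+)\cdot v=\bbc v$. (c) $\ca{U}(\fr{n}^-)=\bbc[e_{n+2,1},\dots,e_{n+2,n+1}]$ acts by polynomial multiplication, so $\ca{U}(\fr{n}^-)\cdot M^{(k)}\subseteq\bigoplus_{j\ge k}M^{(j)}$.

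Combining these with the PBW factorization $\ca{U}(\fr{sl}_{n+2})=\ca{U}(\fr{n}^-)\,\ca{U}(\fr{a}')\,\ca{U}(\fr{n}^+)$ (use an ordered basis listing $\fr{n}^-$ first, then $\fr{a}'$, then $\fr{n}^+$), and applying to $v$ via (b), then (a), then (c),
\[
\ca{U}(\fr{sl}_{n+2})\cdot v=\ca{U}(\fr{n}^-)\,\ca{U}(\fr{a}')\,\ca{U}(\fr{n}^+)\cdot v=\ca{U}(\fr{n}^-)\,\ca{U}(\fr{a}')\cdot v\subseteq\ca{U}(\fr{n}^-)\cdot M^{(N)}\subseteq\bigoplus_{k\ge N}M^{(k)}.
\]
Because $v$ is homogeneous of degree $N$ with $N\in\bb{N}$, we have $N\ge 1$, so the nonzero subspace $M^{(0)}=1\otimes V(\a,S,b,\lambda)$ is disjoint from $\ca{U}(\fr{sl}_{n+2})\cdot v$; hence $\ca{U}(\fr{sl}_{n+2})\cdot v\neq M$ and the submodule is proper.

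I do not anticipate a genuine obstacle: the argument is bookkeeping with the grading. The one point deserving care is the claim that $\ca{U}(\fr{sl}_{n+2})\cdot v$ stays in degrees $\ge N$ — one might worry that iterating the degree-lowering operators $e_{i,n+2}$ drags elements below degree $N$ — but this is exactly what the hypothesis $e_{i,n+2}\cdot v=0$ (hence $\ca{U}(\fr{n}^+)\cdot v=\bbc v$) rules out, and the PBW rearrangement above makes the reasoning rigorous. Facts (a) and (b) themselves are immediate, from centrality of $h_{n+2}$ in $\fr{a}'$ and from commutativity of $\fr{n}^+$, respectively.
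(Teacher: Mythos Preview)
Your proof is correct and follows essentially the same approach as the paper: both use the PBW factorization $\ca{U}(\fr{sl}_{n+2})=\ca{U}(\fr{n}^-)\,\ca{U}(\fr{a}')\,\ca{U}(\fr{n}^+)$ together with the hypothesis $\fr{n}^+\cdot v=0$ and the $h_{n+2}$-grading to conclude that the submodule lies in degrees $\ge N$. The only difference is cosmetic: the paper verifies that $\fr{a}'$ preserves degree by explicit commutator formulas from Lemma~\ref{lemma2}, whereas you deduce it cleanly from the fact that $M^{(k)}$ is an $h_{n+2}$-eigenspace and $h_{n+2}$ is central in $\fr{a}'$.
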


\begin{proof}
Clearly, $\ca{U}(\fr{sl}_{n+2}(\bbc))v$ is a nonzero submodule of $M_{\fr{p}}(V(\a,S,b,\lambda))$. To show it is proper, we will show that any nonzero element in $\ca{U}(\fr{sl}_{n+2}(\bbc))v$ has degree greater than or equal to $N$. By the PBW Theorem we see that
$$\ca{U}(\fr{sl}_{n+2})=\bbc[e_{n+2,1},\cdots,e_{n+2,n+1}]\otimes\bbc[h_{n+2}] \otimes \ca{U}(\fr{sl}_{n+1}) \otimes \bbc[e_{1,n+2 },\cdots,e_{n+1,n+2}],
$$ yielding that
$$\ca{U}(\fr{sl}_{n+2})v=\bbc[e_{n+2,1},\cdots,e_{n+2,n+1}]\bbc[h_{n+2}]\ca{U}(\fr{sl}_{n+1})v.
$$
So we only need to show that if the following elements are not zero, then they are homogeneous of degree $N$: $h_{n+2}\cdot v, h_k\cdot v (1\leq k\leq n), e_{i,k}\cdot v (1\leq i\neq k\leq n+1)$.

Since for $1\leq i\neq k\leq n+1$, using the formulas in Lemma~\ref{lemma2} we have
\begin{align*}
h_{n+2}\cdot v&=(\lambda+\frac{n+2}{n+1}N)v,\\
&\\
h_k\cdot v&=h_k\cdot\sum\limits_{|\ol{m}|=N}E^{\ol{m}}P_{\ol{m}}\\
&=\sum\limits_{|\ol{m}|=N}E^{\ol{m}}(h_k-m_k+\frac{N}{n+1})P_{\ol{m}},\\
&\\
e_{i,k}\cdot v&=e_{i,k}\cdot \sum\limits_{|\ol{m}|=N}E^{\ol{m}}P_{\ol{m}}\\
&=\sum\limits_{|\ol{m}|=N}\prod\limits_{j\neq i}e_{n+2,j}^{m_j}e_{i,k}e_{n+2,i}^{m_i}P_{\ol{m}}\\
&=\sum\limits_{|\ol{m}|=N}\Big(E^{\ol{m}}(e_{i,k}\cdot P_{\ol{m}})-m_i\prod\limits_{j\neq i,k}e_{n+2,j}^{m_j}e_{n+2,k}^{m_k+1}e_{n+2,i}^{m_i-1}P_{\ol{m}}\Big).
\end{align*}
Hence, if $h_{n+2}\cdot v\neq0, h_k\cdot v\neq0, e_{i,k}\cdot v\neq0$, then they are homogeneous of  degree $N$. This completes the proof.
\end{proof}

We also need some results on linear systems. Let $\ol{h_i}=h_i-\delta_{i,n+1}$. For $S\subseteq\{1,2,\cdots,n+1\},\lambda,b\in\bbc$ and $N\in\bb{N}$, we define an $(n+1)\times(n+1)$  matrix $A(\lambda,b,S,N)=(A_{ij}(\lambda,b,S,N))_{1\leq i,j\leq n+1}$ by
\begin{align*}
A_{ij}(\lambda,b,S,N)&=(\delta_{i\in S}+\delta_{i\not\in S}(\ol{h_i}-b))(\delta_{j\in S}(\ol{h_j}-b)+\delta_{j\not\in S}), i\neq j,\\
A_{ii}(\lambda,b,S,N)&=\ol{h_i}-\lambda-N+2.
\end{align*}

\begin{lemma}
\label{detlemma} We have
\[
\det A(\lambda,b,S,N)=(-nb-\lambda-N+1)(b-\lambda-N+2)^n.
\]
\end{lemma}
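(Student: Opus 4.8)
The plan is to compute $\det A(\lambda,b,S,N)$ by exploiting the rank-one-plus-diagonal structure of the matrix. Write $\alpha_i := \delta_{i\in S}+\delta_{i\not\in S}(\ol{h_i}-b)$ and $\beta_j := \delta_{j\in S}(\ol{h_j}-b)+\delta_{j\not\in S}$, so that for $i\neq j$ we have $A_{ij}=\alpha_i\beta_j$. Thus $A(\lambda,b,S,N) = (\alpha_i\beta_j)_{i,j} + D$, where $D$ is the diagonal matrix with entries $D_{ii} = A_{ii} - \alpha_i\beta_i = (\ol{h_i}-\lambda-N+2) - \alpha_i\beta_i$. A short case check (either $i\in S$ or $i\notin S$) shows $\alpha_i\beta_i = \ol{h_i}-b$ in both cases, so $D_{ii} = (\ol{h_i}-\lambda-N+2)-(\ol{h_i}-b) = b-\lambda-N+2$ for every $i$. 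Hence $D = (b-\lambda-N+2)\,I$, a scalar matrix — this is the key simplification that makes the determinant tractable.

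Next I would apply the matrix determinant lemma (rank-one update): for $A = D + \mathbf{u}\mathbf{v}^{\top}$ with $D$ invertible,
\[
\det(D+\mathbf{u}\mathbf{v}^{\top}) = \det(D)\bigl(1 + \mathbf{v}^{\top}D^{-1}\mathbf{u}\bigr).
\]
Here $\mathbf{u} = (\alpha_1,\dots,\alpha_{n+1})^{\top}$, $\mathbf{v} = (\beta_1,\dots,\beta_{n+1})^{\top}$, $D = c\,I$ with $c = b-\lambda-N+2$, so $\det D = c^{n+1}$ and $\mathbf{v}^{\top}D^{-1}\mathbf{u} = c^{-1}\sum_{i=1}^{n+1}\alpha_i\beta_i = c^{-1}\sum_{i=1}^{n+1}(\ol{h_i}-b)$. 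Therefore
\[
\det A(\lambda,b,S,N) = c^{n+1}\Bigl(1 + c^{-1}\sum_{i=1}^{n+1}(\ol{h_i}-b)\Bigr) = c^{n}\Bigl(c + \sum_{i=1}^{n+1}(\ol{h_i}-b)\Bigr).
\]
It remains to evaluate $\sum_{i=1}^{n+1}(\ol{h_i}-b)$. Since $\ol{h_i}=h_i$ for $i\leq n$ and $\ol{h_{n+1}}=h_{n+1}-1$, and recalling $h_{n+1} = -h_1-\cdots-h_n$, we get $\sum_{i=1}^{n+1}\ol{h_i} = (h_1+\cdots+h_n) + (h_{n+1}-1) = -1$, so $\sum_{i=1}^{n+1}(\ol{h_i}-b) = -1-(n+1)b$. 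Substituting $c = b-\lambda-N+2$ gives
\[
c + \sum_{i=1}^{n+1}(\ol{h_i}-b) = (b-\lambda-N+2) - 1 - (n+1)b = -nb-\lambda-N+1,
\]
which yields exactly $\det A(\lambda,b,S,N) = (-nb-\lambda-N+1)(b-\lambda-N+2)^n$, as claimed.

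To be careful about the invertibility hypothesis in the determinant lemma: the computation above is an identity of polynomials in $h_1,\dots,h_n$ (or rational expressions), so it holds in the fraction field where $c$ is a nonzero element, and then extends to the case $c=0$ by continuity/polynomiality — alternatively one checks $c=0$ directly, where $A$ is the pure rank-one matrix $\mathbf{u}\mathbf{v}^{\top}$ plus zero diagonal correction, whose determinant vanishes for $n+1\geq 2$, matching the right-hand side which has the factor $(b-\lambda-N+2)^n = c^n = 0$. The only genuinely delicate point is the bookkeeping identity $\alpha_i\beta_i = \ol{h_i}-b$ and the sum $\sum\ol{h_i}=-1$; everything else is the standard rank-one determinant formula. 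I expect no real obstacle here — the main thing is to recognize the $D+\mathbf{u}\mathbf{v}^{\top}$ structure and to verify that the diagonal correction is a genuine scalar multiple of the identity rather than merely diagonal.
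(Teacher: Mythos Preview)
Your proof is correct, but it proceeds by a genuinely different route from the paper's. The paper first conjugates $A(\lambda,b,S,N)$ by the diagonal matrices $Q_s(\ol{h_s}-b)$ for $s\notin S$ to strip away the $S$-dependence entirely, obtaining a matrix whose $(i,j)$-entry is $\ol{h_j}-b$ off the diagonal and $\ol{h_i}-\lambda-N+2$ on the diagonal; it then adds all columns to the first (pulling out the factor $-nb-\lambda-N+1$) and subtracts the first row from the others to reach an upper-triangular form. You instead observe directly that $A = cI + \mathbf{u}\mathbf{v}^\top$ with $c=b-\lambda-N+2$ a \emph{scalar}, and invoke the matrix determinant lemma.

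Your approach is shorter and more conceptual: the identity $\alpha_i\beta_i=\ol{h_i}-b$ (independent of whether $i\in S$) immediately explains why the answer is free of both $S$ and the $h_i$'s, and the rank-one formula then gives the factorization in one line. The paper's approach is more elementary --- pure row and column manipulation with no external lemma --- but the initial conjugation step is somewhat ad hoc unless one has already spotted the rank-one structure. Your handling of the degenerate case $c=0$ (polynomial identity / direct rank argument) is fine; in fact the identity $\det(cI+\mathbf{u}\mathbf{v}^\top)=c^n(c+\mathbf{v}^\top\mathbf{u})$ holds as a polynomial identity in all entries, so no separate case is strictly needed.
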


\begin{proof}
Let $Q_i(x)$ be the matrix obtained from the identity matrix by replacing the $(i,i)$ entry by $x$. For convenience of computations we will use the fraction field $\bbc(\fr{h}_{n})$ of the polynomial ring $\bbc[\fr{h}_{n}]$. Since $\sum\limits_{i=1}^{n+1}h_i=0$, we have
\begin{align*}
\det A(\lambda,b,S,N)&=\prod\limits_{s\not\in S}\det Q_{s}(\frac{1}{\ol{h_{s}}-b})\det A(\lambda,b,S,N)\prod\limits_{s\not\in S}\det Q_{s}(\ol{h_{s}}-b)\\
&=\det(\prod\limits_{s\not\in S}Q_{s}(\frac{1}{\ol{h_{s}}-b})A(\lambda,b,S,N)\prod\limits_{s\not\in S}Q_{s}(\ol{h_{s}}-b))\\
&=\begin{vmatrix}
h_1-\lambda-N+2 & h_2-b & \cdots & h_{n+1}-b-1 \\
h_1-b & h_2-\lambda-N+2 & \cdots & h_{n+1}-b-1 \\
\vdots & \vdots & \ddots & \vdots \\
h_1-b & h_2-b & \cdots & h_{n+1}-\lambda-N+1
\end{vmatrix}\\
\\
&=\begin{vmatrix}
-nb-\lambda-N+1 & h_2-b & \cdots & h_{n+1}-b-1 \\
-nb-\lambda-N+1 & h_2-\lambda-N+2 & \cdots & h_{n+1}-b-1 \\
\vdots & \vdots & \ddots & \vdots \\
-nb-\lambda-N+1 & h_2-b & \cdots & h_{n+1}-\lambda-N+1
\end{vmatrix}\\
\\
&=(-nb-\lambda-N+1)\begin{vmatrix}
1 & h_2-b & \cdots & h_{n+1}-b-1 \\
1 & h_2-\lambda-N+2 & \cdots & h_{n+1}-b-1 \\
\vdots & \vdots & \ddots & \vdots \\
1 & h_2-b & \cdots & h_{n+1}-\lambda-N+1
\end{vmatrix}\\
\\
&=(-nb-\lambda-N+1)\begin{vmatrix}
1 & h_2-b & \cdots & h_{n+1}-b-1 \\
0 & b-\lambda-N+2 & \cdots & 0 \\
\vdots & \vdots & \ddots & \vdots \\
0 & 0 & \cdots & b-\lambda-N+2
\end{vmatrix}\\
&=(-nb-\lambda-N+1)(b-\lambda-N+2)^n.
\end{align*}
\end{proof}

\begin{lemma}
\label{nulllemma1}  If $nb+\lambda+N-1=0$, then for any $\ol{m}\in\bb{Z}_+^{n+1}$,
\[
(a_1^{-1}(m_1+1)\sigma_1^{-1}(\hskip -3pt P_{\ol{m}+\e_1}(S,\a)),\cdots,a_n^{-1}(m_n+1)\sigma_n^{-1}(\hskip -3pt P_{\ol{m}+\e_n}\hskip -3pt(S,\a)),(m_{n+1}+1)\hskip -3ptP_{\ol{m}+\e_{n+1}}\hskip -3pt (S,\a))
\]
is a solution to the linear system
\[
A(\lambda,b,S,N)(X_1,\cdots,X_{n+1})^T=0.
\]
\end{lemma}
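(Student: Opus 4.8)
The plan is to reduce the claim to a single scalar identity. First, I would apply Lemma~\ref{lemma7} to rewrite the proposed solution vector in a uniform way: in both cases $n+1\in S$ and $n+1\notin S$, and for every index $1\le j\le n+1$, the $j$-th entry can be written as
\[
a_j^{-1}(m_j+1)\sigma_j^{-1}(P_{\ol{m}+\e_j}(S,\a)) \;=\; c_j\,R,
\]
where $c_j:=\delta_{j\in S}+\delta_{j\notin S}(\ol{h_j}-b)$, and $R:=P_{\ol{m}}(S,\a)$ if $n+1\in S$ while $R:=\Theta_{\ol{m}}(S,\a)$ if $n+1\notin S$. Here one uses $a_{n+1}=1$, $\sigma_{n+1}=\mathrm{id}$, and the fact that $\ol{h_j}=h_j$ for $j\le n$ whereas $\ol{h_{n+1}}=h_{n+1}-1$, so that e.g. the factor $h_{n+1}-b-1$ appearing in Lemma~\ref{lemma7}(2) for $j=n+1$ is exactly $\ol{h_{n+1}}-b$. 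Thus the candidate solution is $X = R\cdot(c_1,\cdots,c_{n+1})^T$.

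Next I would exploit the diagonal conjugation already carried out in the proof of Lemma~\ref{detlemma}. Working over the fraction field $\bbc(\fr{h}_n)$, set $D:=\mathrm{diag}(c_1,\cdots,c_{n+1})$ (this is the matrix $\prod_{s\notin S}Q_s(\ol{h_s}-b)$ of that proof) and $\tilde A := D^{-1}A(\lambda,b,S,N)D$; the computation in Lemma~\ref{detlemma} shows $\tilde A_{ij}=\ol{h_j}-b$ for $i\neq j$ and $\tilde A_{ii}=\ol{h_i}-\lambda-N+2$. Since $X = R\,D\mathbf{1}$ with $\mathbf{1}=(1,\cdots,1)^T$, we get $A(\lambda,b,S,N)X = R\,D\tilde A\mathbf{1}$, so it suffices to check $\tilde A\mathbf{1}=0$. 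The $i$-th component of $\tilde A\mathbf{1}$ equals $(\ol{h_i}-\lambda-N+2)+\sum_{j\neq i}(\ol{h_j}-b)$; using $\sum_{j=1}^{n+1}\ol{h_j}=\sum_{j=1}^{n+1}h_j-1=-1$ (from $h_{n+1}=-h_1-\cdots-h_n$), one finds $\sum_{j\neq i}(\ol{h_j}-b)=-1-(n+1)b-(\ol{h_i}-b)=-1-nb-\ol{h_i}$, so the component collapses to $1-\lambda-N-nb=-(nb+\lambda+N-1)$, which vanishes precisely by the hypothesis. Hence $\tilde A\mathbf{1}=0$, and therefore $A(\lambda,b,S,N)X=0$ as a polynomial identity.

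The only genuine work is the bookkeeping in the first step: one must run through Lemma~\ref{lemma7} case by case — the subcases $1\le j\le n$ with $j\in S$, with $j\notin S$, and $j=n+1$, separately for $n+1\in S$ and $n+1\notin S$ — and confirm that in every instance the output really is the single expression $c_j R$; this is where the definitions of $\ol{h_j}$ and of $A(\lambda,b,S,N)$ have been set up exactly so that everything matches. One can also bypass the conjugation and expand $\sum_j A_{ij}(\lambda,b,S,N)X_j$ directly, using the identity $(\delta_{j\in S}(\ol{h_j}-b)+\delta_{j\notin S})c_j=\ol{h_j}-b$ (valid since $\delta_{j\in S}\delta_{j\notin S}=0$); the algebra is then identical to the computation above.
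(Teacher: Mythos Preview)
Your argument is correct. Both you and the paper rely on Lemma~\ref{lemma7} to identify the entries of the candidate vector, but from there the paper proceeds by brute force: it fixes $i$, splits into the subcases $n+1\in S$ versus $n+1\notin S$, further separates $1\le i\le n$ from $i=n+1$, expands $\sum_j A_{ij}\,X_j$ term by term, and in every subcase collects the sum into $(-nb-\lambda-N+1)$ times a common factor (namely $(\delta_{i\in S}+\delta_{i\notin S}(h_i-b))P_{\ol m}$ or its $\Theta$-analogue). Your route is more structural: you recognise the candidate vector globally as $R\,D\mathbf{1}$, reuse the diagonal conjugation already set up in Lemma~\ref{detlemma} to replace $A$ by the $S$-independent matrix $\tilde A$, and reduce everything to the single row-sum computation $\sum_{j}\ol{h_j}-nb-\lambda-N+2=1-nb-\lambda-N$. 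This collapses all four subcases of the paper's proof into one line and makes transparent why exactly the hypothesis $nb+\lambda+N-1=0$ is the right one; the price is the detour through the fraction field $\bbc(\fr h_n)$, which you correctly note is harmless since the resulting identity lives in the polynomial ring. Your final remark --- that one can bypass the conjugation and expand directly using $(\delta_{j\in S}(\ol{h_j}-b)+\delta_{j\notin S})c_j=\ol{h_j}-b$ --- is in fact precisely what the paper does, so the two approaches converge at that point.
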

\begin{proof}
Following from Lemma~\ref{lemma7}, if $n+1\in S$, then for $1\leq i\leq n$, we have
\begin{align*}
&\sum\limits_{j=1}^{n+1}A_{ij}(\lambda,b,S,N)a_j^{-1}(m_j+1)\sigma_j^{-1}(P_{\ol{m}+\e_j}(S,\a))\\
=&\sum\limits_{\substack{j\in S\\j\neq i}}(\delta_{i\in S}+\delta_{i\not\in S}(h_i-b))(h_j-b-\delta_{j,n+1})P_{\ol{m}}(S,\a)\\
&+\sum\limits_{\substack{j\not\in S\\j\neq i}}(\delta_{i\in S}+\delta_{i\not\in S}(h_i-b))(h_j-b)P_{\ol{m}}(S,\a)\\
&+(h_i-\lambda-N+2)(\delta_{i\in S}+\delta_{i\not\in S}(h_i-b))P_{\ol{m}}(S,\a)\\
=&(\sum\limits_{j\neq i,n+1}(h_j-b)+h_{n+1}-b-1+h_i-\lambda-N+2)(\delta_{i\in S}+\delta_{i\not\in S}(h_i-b))P_{\ol{m}}(S,\a)\\
=&(-nb-\lambda-N+1)(\delta_{i\in S}+\delta_{i\not\in S}(h_i-b))P_{\ol{m}}(S,\a)\\
=&0.
\end{align*}
Also, we have
\begin{align*}
&\sum\limits_{j=1}^{n+1}A_{n+1,j}(\lambda,b,S,N)a_j^{-1}(m_j+1)\sigma_j^{-1}(P_{\ol{m}+\e_j}(S,\a))\\
=&\sum\limits_{\substack{j\in S\\j\neq n+1}}(h_j-b)P_{\ol{m}}(S,\a)+\sum\limits_{j\not\in S}(h_j-b)P_{\ol{m}}(S,\a)+(h_{n+1}-\lambda-N+1)P_{\ol{m}}(S,\a)\\
=&(\sum\limits_{j\neq n+1}(h_j-b)+h_{n+1}-\lambda-N+1)P_{\ol{m}}(S,\a)\\
=&(-nb-\lambda-N+1)P_{\ol{m}}(S,\a)\\
=&0.
\end{align*}

Now suppose that $n+1\not\in S$. Then for $1\leq i\leq n$,
\begin{align*}
&\sum\limits_{j=1}^{n+1}A_{ij}(\lambda,b,S,N)a_j^{-1}(m_j+1)\sigma_j^{-1}(P_{\ol{m}+\e_j}(S,\a))\\
=&\sum\limits_{\substack{j\in S\\j\neq i}}(\delta_{i\in S}+\delta_{i\not\in S}(h_i-b))(h_j-b)\Theta_{\ol{m}}(S,\a)\\
&+\sum\limits_{\substack{j\not\in S\\j\neq i,n+1}}(\delta_{i\in S}+\delta_{i\not\in S}(h_i-b))(h_j-b)\Theta_{\ol{m}}(S,\a)\\
&+(\delta_{i\in S}+\delta_{i\not\in S}(h_i-b))(h_{n+1}-b-1)\Theta_{\ol{m}}(S,\a)\\
&+(h_i-\lambda-N+2)(\delta_{i\in S}+\delta_{i\not\in S}(h_i-b))\Theta_{\ol{m}}(S,\a)\\
=&(\delta_{i\in S}+\delta_{i\not\in S}(h_i-b))(\sum\limits_{j\neq i,n+1}(h_j-b)+h_{n+1}-b-1+h_i-\lambda-N+2)\Theta_{\ol{m}}(S,\a)\\
=&(\delta_{i\in S}+\delta_{i\not\in S}(h_i-b))(-nb-\lambda-N+1)\Theta_{\ol{m}}(S,\a)\\
=&0.
\end{align*}
And
\begin{align*}
&\sum\limits_{j=1}^{n+1}A_{n+1,j}(\lambda,b,S,N)a_j^{-1}(m_j+1)\sigma_j^{-1}(P_{\ol{m}+\e_j}(S,\a))\\
=&\sum\limits_{j\in S}(h_{n+1}-b-1)(h_j-b)\Theta_{\ol{m}}(S,\a)+\sum\limits_{\substack{j\not\in S\\j\neq n+1}}(h_{n+1}-b-1)(h_j-b)\Theta_{\ol{m}}(S,\a)\\
&+(h_{n+1}-\lambda-N+1)(h_{n+1}-b-1)\Theta_{\ol{m}}(S,\a)\\
=&(\sum\limits_{j\neq n+1}(h_j-b)+h_{n+1}-\lambda-N+1)(h_{n+1}-b-1)\Theta_{\ol{m}}(S,\a)\\
=&(-nb-\lambda-N+1)(h_{n+1}-b-1)\Theta_{\ol{m}}(S,\a)\\
=&0.
\end{align*}
This proves the statement in this lemma.
\end{proof}

Similarly, we have

\begin{lemma}
\label{nulllemma2} If $b-\lambda-N+2=0$,  then for any $\ol{m}\in\bb{Z}_+^{n+1}$,
\[
(a_1^{-1}(m_1+1)\sigma_1^{-1}(\hskip -3pt P'_{\ol{m}+\e_1}(S,\a)),\cdots,a_n^{-1}(m_n+1)\sigma_n^{-1}(\hskip -3pt P'_{\ol{m}+\e_n}(S,\a)),(m_{n+1}+1)\hskip -3pt P'_{\ol{m}+\e_{n+1}}(S,\a))
\]
is a solution to the linear system
\[
A(\lambda,b,S,N)(X_1,\cdots,X_{n+1})^T=0.
\]
\end{lemma}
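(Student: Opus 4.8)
The statement is the exact analogue of Lemma~\ref{nulllemma1} with the roles of $P_{\ol m}(S,\a)$ and $P'_{\ol m}(S,\a)$ interchanged, and with the relevant vanishing condition changed from $nb+\lambda+N-1=0$ to $b-\lambda-N+2=0$. Accordingly, the plan is to mimic the proof of Lemma~\ref{nulllemma1} verbatim, substituting the $P'$-evaluation formulas from Lemma~\ref{lemma7} for the $P$-evaluation formulas. Concretely, I will split into the two cases $n+1\in S$ and $n+1\notin S$, and for each of these I will compute the two kinds of matrix-row pairings: the generic rows $1\le i\le n$, and the last row $i=n+1$.

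For the case $n+1\in S$: by part (1) of Lemma~\ref{lemma7}, the vector $\bigl(a_j^{-1}(m_j+1)\sigma_j^{-1}(P'_{\ol m+\e_j}(S,\a))\bigr)_j$ equals $(h_{n+1}-b-1)\Delta_{\ol m}(S,\a)$ times the vector whose $j$-th entry is $1$ for $j\in S$ with $j\le n$, is $(h_j-b)$ for $j\notin S$ with $j\le n$, and is $(h_{n+1}+nb)/(h_{n+1}-b-1)$ for $j=n+1$. Plugging this into the $i$-th row of $A(\lambda,b,S,N)$ and using $\ol{h_j}=h_j-\delta_{j,n+1}$, the cross terms $A_{ij}$ for $j\ne i$ together with the diagonal term $A_{ii}=\ol{h_i}-\lambda-N+2$ collapse — exactly as in Lemma~\ref{nulllemma1} — to a common factor $(\delta_{i\in S}+\delta_{i\notin S}(h_i-b))$ times the scalar $\sum_{j\ne i,n+1}(h_j-b)+(h_{n+1}-b-1)+\ol{h_i}-\lambda-N+2$. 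Using $\sum_{j=1}^{n+1}h_j=0$ this scalar simplifies; the point is that with the $P'$-weights the surviving scalar is $b-\lambda-N+2$ (rather than $-nb-\lambda-N+1$), which vanishes by hypothesis. The last row $i=n+1$ is handled the same way and again produces the factor $b-\lambda-N+2$.

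For the case $n+1\notin S$: now part (2) of Lemma~\ref{lemma7} gives that the candidate vector is $\Upsilon_{\ol m}(S,\a)$ times the vector with entries $1$ (for $j\le n$, $j\in S$), $(h_j-b)$ (for $j\le n$, $j\notin S$), and $(h_{n+1}+nb)$ (for $j=n+1$). Substituting into the $i$-th row of $A$ and collecting, the off-diagonal and diagonal contributions again combine into $(\delta_{i\in S}+\delta_{i\notin S}(h_i-b))$ times a scalar that, after applying $\sum h_j=0$, equals $b-\lambda-N+2=0$; similarly for $i=n+1$. Thus $A(\lambda,b,S,N)$ annihilates the stated vector in all cases.

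The verification is entirely a matter of bookkeeping with the indicator functions $\delta_{j\in S}$, $\delta_{j\notin S}$ and the shifts $\ol{h_j}=h_j-\delta_{j,n+1}$; the only genuinely delicate point — and the step I expect to require the most care — is checking that the scalar left over after the telescoping really is $b-\lambda-N+2$ and not $-nb-\lambda-N+1$. This hinges on the fact that the $P'$-formulas in Lemma~\ref{lemma7} carry the factor $(h_{n+1}-b-1)$ in the $n+1\in S$ case (respectively the factor $(h_{n+1}+nb)$ in the $j=n+1$ slot), which shifts exactly which linear combination of the $h_j$'s telescopes; one must be attentive to these shifts by $\pm1$ and to the $j=n+1$ exceptional column of $A$. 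Once that is pinned down, the computation closes identically to Lemma~\ref{nulllemma1}, and the proof may legitimately be abbreviated to ``the lemma follows from direct computations using Lemma~\ref{lemma7}, in the same way as Lemma~\ref{nulllemma1}.''
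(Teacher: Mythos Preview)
Your proposal is correct and follows exactly the paper's approach: the same case split on $n+1\in S$ versus $n+1\notin S$, the same sub-split into rows $1\le i\le n$ versus $i=n+1$, and the same use of Lemma~\ref{lemma7} to reduce each row-sum to a common polynomial times the scalar $b-\lambda-N+2$. One small slip to fix when you write it up: in the $n+1\in S$ case the displayed scalar $\sum_{j\ne i,n+1}(h_j-b)+(h_{n+1}-b-1)+\ol{h_i}-\lambda-N+2$ is the one from Lemma~\ref{nulllemma1}; with the $P'$-formulas the common factor is $(\delta_{i\in S}+\delta_{i\notin S}(h_i-b))(h_{n+1}-b-1)\Delta_{\ol m}$ and the $j=n+1$ contribution to the bracketed scalar is $(h_{n+1}+nb)$, which is exactly what yields $b-\lambda-N+2$ --- as you correctly note in your final paragraph.
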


\begin{proof}
\begin{enumerate}[(1)]
\item We first assume that $n+1\in S$.

In this case, for $1\leq i\leq n$, we have
\begin{align*}
&\sum\limits_{j=1}^{n+1}A_{ij}(\lambda,b,S,N)a_j^{-1}(m_j+1)\sigma_j^{-1}(P'_{\ol{m}+\e_j}(S,\a))\\
=&\sum\limits_{\substack{j\in S\\j\neq i,n+1}}(\delta_{i\in S}+\delta_{i\not\in S}(h_i-b))(h_j-b)(h_{n+1}-b-1)\Delta_{\ol{m}}(S,\a)\\
&+(\delta_{i\in S}+\delta_{i\not\in S}(h_i-b))(h_{n+1}-b-1)(h_{n+1}+nb)\Delta_{\ol{m}}(S,\a)\\
&+\sum\limits_{\substack{j\neq S\\j\neq i}}(\delta_{i\in S}+\delta_{i\not\in S}(h_i-b))(h_j-b)(h_{n+1}-b-1)\Delta_{\ol{m}}(S,\a)\\
&+(h_i-\lambda-N+2)(\delta_{i\in S}+\delta_{i\not\in S}(h_i-b))(h_{n+1}-b-1)\Delta_{\ol{m}}(S,\a)\\
=&(\delta_{i\in S}+\delta_{i\not\in S}(h_i-b))(h_{n+1}-b-1)(\sum\limits_{j\neq i,n+1}(h_j-b)+h_i-\lambda-N+2\\
&+h_{n+1}+nb)\Delta_{\ol{m}}(S,\a)\\
=&(\delta_{i\in S}+\delta_{i\not\in S}(h_i-b))(h_{n+1}-b-1)(b-\lambda-N+2)\Delta_{\ol{m}}(S,\a)\\
=&0.
\end{align*}
And
\begin{align*}
&\sum\limits_{j=1}^{n+1}A_{n+1,j}(\lambda,b,S,N)a_j^{-1}(m_j+1)\sigma_j^{-1}(P'_{\ol{m}+\e_j}(S,\a))\\
=&\sum\limits_{\substack{j\in S\\j\neq n+1}}(h_j-b)(h_{n+1}-b-1)\Delta_{\ol{m}}(S,\a)+\sum\limits_{j\not\in S}(h_j-b)(h_{n+1}-b-1)\Delta_{\ol{m}}(S,\a)\\
&+(h_{n+1}-\lambda-N+1)(h_{n+1}+nb)\Delta_{\ol{m}}(S,\a)\\
=&\sum\limits_{j\neq n+1}(h_j-b)(h_{n+1}-b-1)\Delta_{\ol{m}}(S,\a)+(h_{n+1}-\lambda-N+1)(h_{n+1}+nb)\Delta_{\ol{m}}(S,\a)\\
=&-(h_{n+1}+nb)(h_{n+1}-b-1)\Delta_{\ol{m}}(S,\a)+(h_{n+1}-\lambda-N+1)(h_{n+1}+nb)\Delta_{\ol{m}}(S,\a)\\
=&(h_{n+1}+nb)(b-\lambda-N+2)\Delta_{\ol{m}}(S,\a)\\
=&0.
\end{align*}
\item Now we assume that $n+1\not\in S$.

Now for $1\leq i\leq n$, we have
\begin{align*}
&\sum\limits_{j=1}^{n+1}A_{ij}(\lambda,b,S,N)a_j^{-1}(m_j+1)\sigma_j^{-1}(P'_{\ol{m}+\e_j}(S,\a))\\
=&\sum\limits_{\substack{j\in S\\j\neq i}}(\delta_{i\in S}+\delta_{i\not\in S}(h_i-b))(h_j-b)\Upsilon_{\ol{m}}(S,\a)\\
&+\sum\limits_{\substack{j\not\in S\\j\neq i,n+1}}(\delta_{i\in S}+\delta_{i\not\in S}(h_i-b))(h_j-b)\Upsilon_{\ol{m}}(S,\a)\\
&+(\delta_{i\in S}+\delta_{i\not\in S}(h_i-b))(h_{n+1}+nb)\Upsilon_{\ol{m}}(S,\a)\\
&+(h_i-\lambda-N+2)(\delta_{i\in S}+\delta_{i\not\in S}(h_i-b))\Upsilon_{\ol{m}}(S,\a)\\
=&(\delta_{i\in S}+\delta_{i\not\in S}(h_i-b))(\sum\limits_{j\neq i,n+1}(h_j-b)+h_{n+1}+nb+h_i-\lambda-N+2)\Upsilon_{\ol{m}}(s,\a)\\
=&(\delta_{i\in S}+\delta_{i\not\in S}(h_i-b))(b-\lambda-N+2)\Upsilon_{\ol{m}}(S,\a)\\
=&0.
\end{align*}
Also, we have
\begin{align*}
&\sum\limits_{j=1}^{n+1}A_{n+1,j}(\lambda,b,S,N)a_j^{-1}(m_j+1)\sigma_j^{-1}(P'_{\ol{m}+\e_j}(S,\a))\\
=&\sum\limits_{j\in S}(h_{n+1}-b-1)(h_j-b)\Upsilon_{\ol{m}}(S,\a)+\sum\limits_{\substack{j\not\in S\\j\neq n+1}}(h_{n+1}-b-1)(h_j-b)\Upsilon_{\ol{m}}(S,\a)\\
&+(h_{n+1}-\lambda-N+1)(h_{n+1}+nb)\Upsilon_{\ol{m}}(S,\a)\\
=&\sum\limits_{j\neq n+1}(h_{n+1}-b-1)(h_j-b)\Upsilon_{\ol{m}}(S,\a)+(h_{n+1}-\lambda-N+1)(h_{n+1}+nb)\Upsilon_{\ol{m}}(S,\a)\\
=&(h_{n+1}+nb)(b-\lambda-N+2)\Upsilon_{\ol{m}}(S,\a)\\
=&0.
\end{align*}
\end{enumerate}
Hence, the lemma is true.
\end{proof}

Now we can prove our main theorem.

\begin{thm}
\label{mainthm}
Let  $ b\in\bbc,\lambda\in\bbc, \a=(a_1,\cdots,a_n,1)\in(\bbc^*)^{n+1}, S\subseteq \{1,2,...,n+1\}$.
  Then the generalized Verma module $M_{\fr{p}}(V(\a,S,b,\lambda))$ over $\fr{sl}_{n+2}(\bbc)$ is simple if and only if  all the following conditions hold
  \begin{itemize}\item[(i).] $b-\lambda+2\not\in\bb{N}$, \item [(ii).] $nb+\lambda-1\not\in-\bb{N}$, \item[(iii).] $1 \leq |S| \leq n$ or $(n+1)b\notin \bb{Z}_+$.\end{itemize}
\end{thm}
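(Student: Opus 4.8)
The plan is to prove both directions of the equivalence, using the machinery assembled in Section~\ref{section2}.

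\textbf{Necessity.} Suppose one of (i), (ii), (iii) fails; I will exhibit a proper nonzero submodule. If (iii) fails, then by Lemma~\ref{classi} the $\fr{a}'$-module $V(\a,S,b,\lambda)$ itself is not simple, so it has a proper nonzero $\fr{sl}_{n+1}$-submodule $V'$ which is still annihilated by $\fr{n}$; then $M_\fr{p}(V')$ sits inside $M_\fr{p}(V(\a,S,b,\lambda))$ as a proper nonzero submodule (properness because, as graded vector spaces, the degree-$0$ part is $V'\subsetneq V(\a,S,b,\lambda)$). If (i) fails, say $b-\lambda+2=N\in\bb{N}$, I take $v=\sum_{|\ol{m}|=N}E^{\ol{m}}P'_{\ol{m}}(S,\a)$; by Lemma~\ref{nulllemma2} the coefficient vector solves $A(\lambda,b,S,N)X=0$, and the reader checks (via Lemma~\ref{lemma7}) that this is exactly the statement $e_{i,n+2}\cdot v=0$ for all $i$, so Lemma~\ref{lemmasub} gives a proper nonzero submodule $\ca{U}(\fr{sl}_{n+2})v$ — proper because every nonzero element of it has degree $\geq N>0$, while $1\otimes V$ has degree $0$. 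If (ii) fails, say $nb+\lambda-1=-N$ with $N\in\bb{N}$, i.e.\ $nb+\lambda+N-1=0$, the same argument runs with $v=\sum_{|\ol{m}|=N}E^{\ol{m}}P_{\ol{m}}(S,\a)$ using Lemma~\ref{nulllemma1} in place of Lemma~\ref{nulllemma2}. One small point to verify in the $N$-case: that $v$ is genuinely nonzero, i.e.\ that the relevant product polynomials $P_{\ol{m}}$ or $P'_{\ol{m}}$ do not all vanish — this is clear since for $\ol{m}=N\e_{n+1}$ or similar choices one gets an explicitly nonzero polynomial.

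\textbf{Sufficiency.} Assume (i), (ii), (iii) all hold. Let $W$ be a nonzero submodule; I want $W=M_\fr{p}(V(\a,S,b,\lambda))$. By Lemma~\ref{homlemma} we may assume $W$ contains a nonzero homogeneous element $v=\sum_{|\ol{m}|=N}E^{\ol{m}}P_{\ol{m}}$ of some degree $N$. The core step — promised in the introduction as the content of the first part of Section~\ref{section3} — is a \emph{reduction of degree}: if $N>0$, I claim some $e_{i,n+2}\cdot v$ is a nonzero element of $W$ of strictly smaller degree. Indeed if $e_{i,n+2}\cdot v=0$ for all $i$, then by Lemma~\ref{lemma3} the coefficient vectors satisfy, for each fixed $\ol{m}$ with $|\ol{m}|=N-1$, a linear relation whose matrix (after using the module action formula from Lemma~\ref{classi} to rewrite $e_{i,k}\cdot P$) is essentially $A(\lambda,b,S,N)$; by Lemma~\ref{detlemma}, $\det A(\lambda,b,S,N)=(-nb-\lambda-N+1)(b-\lambda-N+2)^n$, and conditions (i), (ii) say precisely that this determinant is a \emph{nonzero} polynomial in $h_1,\dots,h_n$ for every $N\in\bb{N}$ (since $b-\lambda-N+2$ would be zero only if $b-\lambda+2\in\bb{N}$, and $nb+\lambda+N-1$ zero only if $nb+\lambda-1\in-\bb{N}$). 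A nonzero-determinant system over the domain $\bbc[h_1,\dots,h_n]$ (or its fraction field) forces all $P_{\ol{m}}=0$, contradicting $v\neq 0$. Hence for $N>0$ there is $i$ with $0\neq e_{i,n+2}\cdot v\in W$, of degree $N-1$; iterating, $W$ contains a nonzero element of degree $0$, i.e.\ a nonzero element of $1\otimes V(\a,S,b,\lambda)$. Finally, condition (iii) together with Lemma~\ref{classi} says $V(\a,S,b,\lambda)$ is a simple $\fr{a}'$-module, so $\ca{U}(\fr{a}')$ acting on this degree-$0$ element recovers all of $1\otimes V(\a,S,b,\lambda)$, and then $\ca{U}(\fr{g})\cdot(1\otimes V)=M_\fr{p}(V(\a,S,b,\lambda))$. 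Therefore $W$ is everything.

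\textbf{Main obstacle.} The delicate point is the degree-reduction step: one must check that the linear system extracted from Lemma~\ref{lemma3} for the vanishing of all $e_{i,n+2}\cdot v$ is, after the diagonal conjugation already used in the proof of Lemma~\ref{detlemma}, exactly governed by the matrix $A(\lambda,b,S,N)$, so that Lemmas~\ref{detlemma}, \ref{nulllemma1}, \ref{nulllemma2} apply cleanly; in particular the bookkeeping with the automorphisms $\sigma_j$, the factors $a_j^{-1}(m_j+1)$, and the shift $\ol{h_i}=h_i-\delta_{i,n+1}$ must line up. Everything else is either a direct invocation of an earlier lemma or a short verification that certain explicit polynomials are nonzero.
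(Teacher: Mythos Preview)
Your proposal is correct and follows essentially the same route as the paper: both directions hinge on Lemma~\ref{lemma3} to translate the conditions $e_{i,n+2}\cdot v=0$ into the linear system $A(\lambda,b,S,N)X=0$, then invoke Lemma~\ref{detlemma} for sufficiency and Lemmas~\ref{nulllemma1}--\ref{nulllemma2} with Lemma~\ref{lemmasub} for necessity. The only notable difference is that you handle the failure of~(iii) explicitly via $M_\fr{p}(V')\subsetneq M_\fr{p}(V)$, whereas the paper simply notes that (iii) is equivalent to simplicity of $V(\a,S,b,\lambda)$ and tacitly assumes it thereafter; your treatment is slightly more complete on this point.
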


\begin{proof} From Corollary 32 in \cite{Ni1} and from Lemma~\ref{classi} we know that $V(\a,S,b,\lambda)$ is a simple $\fr{a}'$-module if and only if Condition (iii) holds.  Now we assume that this condition holds.

For sufficiency we need to show: if $b-\lambda+2\not\in\bb{N}$ and $nb+\lambda-1\not\in-\bb{N}$, then the induced module $M_{\fr{p}}(V(\a,S,b,\lambda))$ is simple.

 Suppose $W$ is a proper nonzero submodule of $M_{\fr{p}}(V(\a,S,b,\lambda))$ and fix a nonzero element $v \in S$ for which the degree $N$ of $v$ is minimal. From Lemma~\ref{homlemma} we may assume that $v$ is homogeneous. Since we have assumed the simplicity of $V(\a,S,b,\lambda)$ we have $N\geq1$.

Let $v=\sum\limits_{|\ol{m}|=N}E^{\ol{m}}P_{\ol{m}}$. By the minimality of $N$ we must have $e_{i,n+2} \cdot v=0$ for all $1 \leq i \leq n+1$.
By Lemma~\ref{classi} and Lemma~\ref{lemma3}, for any $\ol{m}\in\bb{Z}_+^{n+1}$ with $|\ol{m}|=N-1$ and any $i: 1 \leq i \leq n+1$ we see that
\begin{align*}
0=&\sum_{k\neq i}(m_{k}+1) (e_{i,k} \cdot P_{\ol{m}+\e_{k}})
+(m_{i}+1)(h_{i}-\lambda-N+1)P_{\ol{m}+\e_{i}} \\
=& \sum_{k\neq i}(m_{k}+1)a_ia_k^{-1}(\delta_{i\in S}+\delta_{i\not\in S}(h_i-b-1))(\delta_{k\in S}(h_k-b)+\delta_{k\not\in S})\sigma_{i}\sigma_k^{-1}\cdot (P_{\ol{m}+\e_{k}})\\
&+(m_{i}+1)(h_{i}-\lambda-N+1)P_{\ol{m}+\e_{i}} .
\end{align*}
Hence,
\begin{align*}
 \sum_{k\neq i}&(m_{k}+1)a_k^{-1}\sigma_i^{-1}((\delta_{i\in S}+\delta_{i\not\in S}(h_i-b-1))(\delta_{k\in S}(h_k-b)+\delta_{k\not\in S}))\sigma_k^{-1}\cdot (P_{\ol{m}+\e_{k}})\\
&+a_i^{-1}(m_{i}+1)\sigma_i^{-1}((h_{i}-\lambda-N+1))\sigma_i^{-1}(P_{\ol{m}+\e_{i}}) =0, \forall \; |\ol{m}|=N-1;1 \leq i \leq n+1.
\end{align*}
Then we have
\[
A(\lambda,b,S,N)\begin{pmatrix}
a_1^{-1}(m_1+1)\sigma_1^{-1}(P_{\ol{m}+\e_1})\\
\vdots\\
a_n^{-1}(m_n+1)\sigma_n^{-1}(P_{\ol{m}+\e_n})\\
(m_{n+1}+1)P_{\ol{m}+\e_{n+1}}
\end{pmatrix}=0 \qquad \forall \; |\ol{m}|=N-1.
\]
When $b-\lambda+2\not\in\bb{N}$ and $nb+\lambda-1\not\in-\bb{N}$, from Lemma~\ref{detlemma}, we know that
\[\det A(\lambda,b,S,N)\in\bb{C}^*.\]
Hence,
\[
P_{\ol{m}+\e_i}=0, \qquad \forall \; |\ol{m}|=N-1;1 \leq i \leq n+1.
\]
Therefore, $v=0$. This is a contradiction. Thus, $M_{\fr{p}}(V(\a,S,b,\lambda))$ is simple.

To prove the necessity we need to show that if $nb+\lambda-1\in-\bb{N}$ or $b-\lambda+2\in\bb{N}$, then $M_{\fr{p}}(V(\a,S,b,\lambda))$ is reducible.

First we assume that  $nb+\lambda+N-1=0$ for some $N\in\bb{N}$.  Take
$$v_1=\sum\limits_{|\ol{m}|=N}E^{\ol{m}}P_{\ol{m}}(S,\a).$$

Then for any $1\leq i\leq n+1$, following from Lemma~\ref{lemma3} and Lemma~\ref{nulllemma1}, we have
\begin{align*}
&e_{i,n+2}\cdot v_1\\
=& \sum_{|\ol{m}|=N-1}E^{\ol{m}}  \sum_{k=1}^{n+1}(m_{k}+1)\Big( (1-\delta_{i,k})(e_{i,k} \cdot P_{\ol{m}+\e_{k}}(S,\a))+\delta_{i,k}(h_{i}-\lambda-N+1)\\
&\cdot P_{\ol{m}+\e_{k}}(S,\a)\Big)\\
=&\sum\limits_{|\ol{m}|=N-1}E^{\ol{m}}\Big( \sum_{k\neq i}(m_{k}+1)a_ia_k^{-1}(\delta_{i\in S}+\delta_{i\not\in S}(h_i-b-1))(\delta_{k\in S}(h_k-b)+\delta_{k\not\in S})\cdot\\
&\sigma_{i}\sigma_k^{-1}(P_{\ol{m}+\e_{k}}(S,\a))+(m_{i}+1)(h_{i}-\lambda-N+1)P_{\ol{m}+\e_{i}}(S,\a) \Big)\\
=&a_i\sum\limits_{|\ol{m}|=N-1}E^{\ol{m}}\sigma_i\Big(\sum\limits_{j=1}^{n+1}A_{ij}(\lambda,b,S,N)a_j^{-1}(m_j+1)\sigma_j^{-1}(P_{\ol{m}+\e_j}(S,\a))\Big)\\
=&a_i\sum\limits_{|\ol{m}|=N-1}E^{\ol{m}}\sigma_i(0)\\
=&0.
\end{align*}
Hence, by Lemma~\ref{lemmasub}, $v_1$ generates a proper submodule of $M_{\fr{p}}(V(\a,S,b,\lambda))$. Therefore $M_{\fr{p}}(V(\a,S,b,\lambda))$ is reducible.

Now we assume that  $b-\lambda-N+2=0$ for some $N\in\bb{N}$.  Take
$$
v_2=\sum\limits_{|\ol{m}|=N}E^{\ol{m}}P'_{\ol{m}}(S,\a).
$$

From Lemma~\ref{lemma3} and Lemma~\ref{nulllemma2}, for any $1\leq i\leq n+1$, we have
\begin{align*}
&e_{i,n+2}\cdot v_2\\
=& \sum_{|\ol{m}|=N-1}E^{\ol{m}}  \sum_{k=1}^{n+1}(m_{k}+1)\Big( (1-\delta_{i,k})(e_{i,k} \cdot P'_{\ol{m}+\e_{k}}(S,\a))+\delta_{i,k}(h_{i}-\lambda-N+1)\\
&\cdot P'_{\ol{m}+\e_{k}}(S,\a)\Big)\\
=&\sum\limits_{|\ol{m}|=N-1}E^{\ol{m}}\Big( \sum_{k\neq i}(m_{k}+1)a_ia_k^{-1}(\delta_{i\in S}+\delta_{i\not\in S}(h_i-b-1))(\delta_{k\in S}(h_k-b)+\delta_{k\not\in S})\\
&\cdot\sigma_{i}\sigma_k^{-1}(P'_{\ol{m}+\e_{k}}(S,\a))+(m_{i}+1)(h_{i}-\lambda-N+1)P'_{\ol{m}+\e_{i}}(S,\a) \Big)\\
=&a_i\sum\limits_{|\ol{m}|=N-1}E^{\ol{m}}\sigma_i\Big(\sum\limits_{j=1}^{n+1}A_{ij}(\lambda,b,S,N)a_j^{-1}(m_j+1)\sigma_j^{-1}(P'_{\ol{m}+\e_j}(S,\a))\Big)\\
=&a_i\sum\limits_{|\ol{m}|=N-1}E^{\ol{m}}\sigma_i(0)\\
=&0.
\end{align*}
Hence, Lemma~\ref{lemmasub} implies that $v_2$ generates a proper submodule of $M_{\fr{p}}(V(\a,S,b,\lambda))$. Therefore $M_{\fr{p}}(V(\a,S,b,\lambda))$ is reducible. This completes the proof.
\end{proof}

\begin{rmk} The result in Theorem \ref{mainthm} agrees with that in \cite{McD2} for the case of $\mathfrak{sl}_3$ when $U(\mathfrak{h}_1)$ is a non-degenerate Whittaker module over $\mathfrak{sl}_2$.
\end{rmk}

\begin{rmk} The method to construct simple modules in this paper can be certainly extended to other types of simple Lie algebras. But the computations might be more complicated.
\end{rmk}

\vspace{0.2cm}
 \noindent Y.C.: Wu Wen Tsun Key Laboratory of Mathematics, Chinese Academy of Science, School of Mathematical Sciences, University of Science and Technology of China, Hefei 230026, Anhui, P. R. China.
Email: yatsai@mail.ustc.edu.cn

\vspace{0.2cm} \noindent G.L.: College of Mathematics and Information
Science, Henan University, Kaifeng 475004, China. Email:
liugenqiang@amss.ac.cn

\vspace{0.2cm} \noindent J.N.: Department of Mathematics, Uppsala University, Box 480, SE-751 06, Uppsala, Sweden. Email: jonathan.nilsson@math.uu.se

\vspace{0.2cm} \noindent K.Z.: Department of Mathematics, Wilfrid
Laurier University, Waterloo, ON, Canada N2L 3C5,   and Department of
Mathematics, Xinyang Normal University, Xinyang, Henan,  464000,
P.R. China. Email:
kzhao@wlu.ca

\end{document}